\newtheorem{lemma}{Lemma}[section]
\newtheorem{theorem}[lemma]{Theorem}
\newtheorem{proposition}[lemma]{Proposition}
\numberwithin{equation}{section}
\title{\textsf{ Hom-Lie superalgebra structures on exceptional simple Lie superalgebras of vector fields}}
\author{\textsc{Liping Sun$^{1,}$}\footnote{Supported by the NSF
  of  HLJ Province, China (A2015017)}\;\;
\textsc{and Wende
Liu$^{2,}$}\footnote{Corresponding author. Email:
\texttt{wendeliu@ustc.edu.cn}.\;\;Supported by the NNSF
  of China (11471090)}\;  \\
  \\
  \textit{$^{1}$Department of Applied sciences,}
  \textit{  Harbin University of Science and Technology} \\
  \textit{Harbin 150080, China}\\
\\
  \ \ \textit{$^{2}$School of Mathematical Sciences },
  \textit{Harbin Normal University}\\
  \textit{Harbin 150025, China}\\
  }
\date{ }
\begin{document}
\maketitle
\begin{quotation}
\noindent\textbf{Abstract:} In this paper, the Hom-Lie superalgebra structures on  exceptional simple Lie superalgebras of vector fields are studied.
Taking advantage of the $\mathbb{Z}$-grading structures and the transitivity, we prove that there is only the trivial Hom-Lie superalgebra structures on  exceptional simple Lie superalgebras. Our proof is obtained by studying the Hom-Lie superalgebra structures on their $0$-$\mathrm{th}$ and $-1$-$\mathrm{st}$ $\mathbb{Z}$-components.

 \noindent \textbf{Keywords}: exceptional Lie superalgebra, Hom-Lie superalgebra structure, gradation, transitivity.

\noindent \textbf{MSC 2010}: 17B40, 17B65, 17B66

  \end{quotation}

  \setcounter{section}{0}
\section{Introduction}
~~~~~~~~~

The motivations to study Hom-Lie algebras and related algebraic structures come from physics and deformations of Lie algebras, in particular, Lie algebras of vector fields. Recently, this kind of investigation becomes rather popular \cite{hls,ls1,ls2,ls3,2016,bm,M2,s,jl,xie}, due in part to the prospect of having a general framework in which one can produce many types of natural deformations of Lie algebras, in particular, $q$-deformations. The notion of Hom-Lie algebras was introduced by  J. T. Hartwig, D. Larsson and S. D. Silvestrov in \cite{hls} to describe the structures on certain deformations of Witt algebras and Virasoro algebras, which are applied widely in the theoretical physics\cite{wuli1,wuli2,wuli3,hu}. Later, W. J. Xie and Q. Q. Jin  gave a description of Hom-Lie algebra structures on semi-simple Lie algebras \cite{jl,xie}.
In 2010, F. Ammar
and A. Makhlouf  generalized Hom-Lie algebras to Hom-Lie
superalgebras \cite{M1}.
In 2013 and 2015, B. T. Cao, L. Luo, J. X. Yuan and W. D. Liu studied
 the Hom-structures on
finite dimensional  simple  Lie  superalgebras \cite{cl, yuan2}.

 As is well known, Kac  classified the infinite-dimensional simple linearly compact Lie superalgebras of vector fields, including  eight Cartan series and five exceptional simple Lie superalgebras \cite{K1}. In 2014, the Hom-Lie superalgebra structures on the eight infinite-dimensional Cartan series were investigated by  J. X. Yuan, L. P. Sun and W. D. Liu \cite{yuan}. In the present paper, we characterize
the  Hom-Lie superalgebra structures on  the five infinite-dimensional exceptional simple Lie superalgebras of vector fields.
Taking advantage of the $\mathbb{Z}$-grading structures and the transitivity, we analyse the Hom-Lie superalgebra structures  through computing the Hom-Lie superalgebra structures on the $0$-$\mathrm{th}$ and $-1$-$\mathrm{st}$ $\mathbb{Z}$-components. The main result of this study shows that the Hom-Lie superalgebra structures on the five infinite-dimensional exceptional simple Lie superalgebras of vector fields must be trivial.

\section{Preliminaries}
Throughout the paper, we will use the following notations.
$\mathbb{F}$ denotes an algebraiclly closed field of  characteristic zero. $\mathbb{Z}_2:=
\{\bar{0},\bar{1}\}$  is the additive group of two-elements.
The symbol $|x|$ and $\mathrm{zd(x)}$ denote the $\mathbb{Z}_2$-degree of a $\mathbb{Z}_2$-homogeneous  element $x$ and the $\mathbb{Z}$-degree of  a $\mathbb{Z}$-homogeneous element $x$, respectively. Notation $\langle v_1,\ldots, v_k\rangle$ is used to represent the linear space spanned by $v_1,\ldots,v_k$ over $\mathbb{F}.$
Now, let us review the five infinite-dimensional exceptional simple $\mathbb{Z}$-graded Lie superalgebras briefly. More detailed descriptions can be found in \cite{K1,SK}.

We construct a Weisfeiler filtration of $L$ by using open subspaces:
$$L=L_{-h}\supset L_{-h+1}\supset \ldots\supset L_0\supset L_1\supset\ldots$$
where $L$ is a simple linearly compact Lie superalgebra.
Let $\mathfrak{g}_i=L_i/L_{i+1},$  $\mathrm{Gr}L=\oplus^{\infty}_{i=-h}\mathfrak{g}_i.$ In \cite{K1}, all possible choices for nonpositive part $\mathfrak{g}_{\leq 0}=\oplus^{0}_{i=-h}\mathfrak{g}_i$ of the associated graded Lie superalgebra of $L$ were derived. The five exceptional simple Lie superalgebras are as follows(the subalgebra $\mathfrak{g}_{\leq 0}$ is written below as the $h+1$-tuple($\mathfrak{g}_{-h},\mathfrak{g}_{-h+1},\ldots \mathfrak{g}_{-1},\mathfrak{g}_0$)):
\begin{eqnarray*}
E(5,10):&& (\mathbb{F}^{5^*},\;\wedge^2(\mathbb{F}^5),\;\mathfrak{sl}(5))\\
E(3,6):&& (\mathbb{F}^{3^*},\;\mathbb{F}^3\otimes\mathbb{F}^2,\;\mathfrak{gl}(3)\oplus\mathfrak{sl}(2))\\
E(3,8):&&(\mathbb{F}^2,\;\mathbb{F}^{3^*},\;\mathbb{F}^3\otimes\mathbb{F}^2,\;\mathfrak{gl}(3)\oplus\mathfrak{sl}(2))\\
E(1,6):&& (\mathbb{F},\;\mathbb{F}^n,\; \mathfrak{cso}(n),n\geq 1,n \neq 2)\\
E(4,4):&& (\mathbb{F}^{4|4},\;\widehat{P}(4))
\end{eqnarray*}
 As we know, the five exceptional simple Lie superalgebras are transitive and irreducible. In particular, $\mathfrak{g}_{-i}=\mathfrak{g}_{-1}^i$ for $i\geq 1$ and the transitivity will be used frequently in this paper:
\begin{eqnarray}\label{e4}
\mathbf{transitivity}:
\;\mbox{if}~  x\in \mathfrak{g}_i,  i\geq 0, ~\mbox{then}~ [x,\mathfrak{g}_{-1}]=0~ \mbox{implies}~  x=0.
\end{eqnarray}

To study the Hom-Lie superalgebras structure, we recall the following definition \cite{cl}.

A  Hom-Lie superalgbra is a triple $(\mathfrak{g},[\cdot,\cdot],\sigma)$ consisting of a $\mathbb{Z}_2$-graded vector space $\mathfrak{g},$  an even bilinear mapping: $\mathfrak{g}\times \mathfrak{g} \rightarrow \mathfrak{g}$
and an even linear mapping: $\sigma:\mathfrak{g}\rightarrow \mathfrak{g}$ satisfying:
\begin{eqnarray}\label{e3}
&& \sigma[x,y]=[\sigma(x),\sigma(y)],\\
&& [x,y]=-(-1)^{|x||y|}[y,x],\\\label{e1}
&& (-1)^{|x||z|}[\sigma(x),[y,z]]+(-1)^{|y||x|}[\sigma(y),[z,x]]+(-1)^{|z||y|}[\sigma(z),[x,y]]=0. \label{e2}
\end{eqnarray}
for all homogeneous elements $x, y$, $z \in \mathfrak{g}.$

An even linear mapping $\sigma$ on a Lie superalgebra $\frak{g}$ is called a  Hom-Lie superalgebra structure on $\frak{g}$ if $(\frak{g}, [\cdot,\cdot], \sigma)$ is a  Hom-Lie superalgebra. In particular, $\sigma$ is called trivial if $\sigma=\mathrm{id}|_g.$

It is obvious that $\sigma$ is graded and if $\sigma$ is a  Hom-Lie superalgebra structure on a simple Lie superalgebra $\mathfrak{g}$, then $\sigma$ must be a monomorphism.
\begin{lemma}\label{00}
Let $\mathfrak{g}=\oplus_{i\geq-h}\mathfrak{g}_i$ be a transitive Lie superalgebra. If $\sigma$ is a  Hom-Lie superalgebra structure on  $\mathfrak{g}$ and
$\sigma|_{\mathfrak{g}_{-1}}=\mathrm{id}|_{\mathfrak{g}_{-1}},$ then $\sigma(x)-x\in \mathfrak{g}_{-k},$ where $x\in \mathfrak{g}_{0},$ $k\geq 1.$
\end{lemma}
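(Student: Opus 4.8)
The plan is to extract information about $\sigma$ on $\mathfrak{g}_0$ by testing it against the bottom layer $\mathfrak{g}_{-1}$, where $\sigma$ is already known to be the identity, and then to use transitivity \eqref{e4} to annihilate the nonnegative graded components of the error term $\sigma(x)-x$. The only axiom I will need is the multiplicativity \eqref{e3}; neither the super-antisymmetry nor the Hom-Jacobi identity \eqref{e2} enters the argument.

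First I would fix a $\mathbb{Z}_2$-homogeneous element $x\in\mathfrak{g}_0$ (the general case follows by linearity, since $\mathfrak{g}_0$ may carry a nontrivial odd part) and an arbitrary $y\in\mathfrak{g}_{-1}$. Because $\mathfrak{g}_0$ preserves $\mathfrak{g}_{-1}$, we have $[x,y]\in\mathfrak{g}_{-1}$, so the hypothesis $\sigma|_{\mathfrak{g}_{-1}}=\mathrm{id}$ gives $\sigma[x,y]=[x,y]$ and $\sigma(y)=y$. Substituting these into \eqref{e3} yields $[x,y]=[\sigma(x),\sigma(y)]=[\sigma(x),y]$, that is,
\[
[\sigma(x)-x,\,y]=0\qquad\text{for all }y\in\mathfrak{g}_{-1}.
\]
Next I would decompose the error term into $\mathbb{Z}$-homogeneous pieces, $\sigma(x)-x=\sum_{j}w_j$ with $w_j\in\mathfrak{g}_j$. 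Since $[w_j,\mathfrak{g}_{-1}]\subseteq\mathfrak{g}_{j-1}$ and these target components lie in pairwise distinct summands of the $\mathbb{Z}$-grading, the displayed vanishing forces $[w_j,\mathfrak{g}_{-1}]=0$ for every $j$ separately. For each index $j\geq 0$, transitivity \eqref{e4} then gives $w_j=0$; in particular $w_0=0$ (so that the $\mathfrak{g}_0$-component of $\sigma(x)$ equals $x$) and all pieces of positive degree vanish as well. Hence only negative-degree pieces can survive, and $\sigma(x)-x\in\bigoplus_{k\geq 1}\mathfrak{g}_{-k}$, as claimed.

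The only point requiring care is that $\sigma$ is not assumed to respect the $\mathbb{Z}$-grading, so a priori $\sigma(x)$ may have components in arbitrarily many degrees; the degree-by-degree splitting of the bracket identity is exactly what tames this, and the lower bound $\mathfrak{g}_j=0$ for $j<-h$ ensures that the surviving error term is in fact a finite sum lying in $\bigoplus_{1\leq k\leq h}\mathfrak{g}_{-k}$. I therefore expect no genuine obstacle: once the test against $\mathfrak{g}_{-1}$ is set up, the conclusion is forced by transitivity, and the whole argument reduces to careful bookkeeping of graded components.
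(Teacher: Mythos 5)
Your proof is correct and takes essentially the same route as the paper's: you test $\sigma(x)-x$ against $\mathfrak{g}_{-1}$ using only the multiplicativity axiom (\ref{e3}) together with $\sigma|_{\mathfrak{g}_{-1}}=\mathrm{id}$, obtain $[\sigma(x)-x,\mathfrak{g}_{-1}]=0$, and then kill the nonnegative-degree components by transitivity (\ref{e4}). The only difference is cosmetic: you spell out the degree-by-degree decomposition of $\sigma(x)-x$ that the paper compresses into the phrase ``the gradation of $\sigma$.''
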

\begin{proof}
For any  $y\in \mathfrak{g}_{-1},$ from $\sigma|_{\mathfrak{g}_{-1}}=\mathrm{id}|_{\mathfrak{g}_{-1}}$ and (\ref{e3}), one can deduce
$$[\sigma(x), y]=[\sigma(x),\sigma(y)]=\sigma[x,y]=[x,y].$$
Then $[\sigma(x)-x, y]=0.$ So, $[\sigma(x)-x, \mathfrak{g}_{-1}]=0.$  The transitivity(\ref{e4}) of $\mathfrak{g}$ and the gradation of $\sigma$ imply that $\sigma(x)-x\in \mathfrak{g}_{-k},$ $k\geq 1.$
\end{proof}
\begin{lemma}\label{l1}
Let $\mathfrak{g}=\oplus_{i\geq -h} \mathfrak{g}_i$ be a transitive and irreducible Lie superalgebra. If $\sigma$ is a  Hom-Lie superalgebra structure on $\mathfrak{g},$ and satisfies
$$\sigma|_{\mathfrak{g}_0\oplus \mathfrak{g}_{-1}}=\mathrm{id}|_{\mathfrak{g}_0\oplus \mathfrak{g}_{-1}},$$ then $$\sigma=\mathrm{id}|_{\mathfrak{g}}.$$
\end{lemma}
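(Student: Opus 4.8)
The plan is to prove $\sigma=\mathrm{id}$ separately on the strictly negative components and on the positive components, in each case by induction on the absolute value of the degree, using only the multiplicativity relation (\ref{e3}) together with transitivity (\ref{e4}); notably, the Hom-Jacobi identity (\ref{e2}) is not needed.

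First I would dispose of the negative part. Since $\mathfrak{g}$ is transitive and irreducible, its negative part is generated in degree $-1$, so $\mathfrak{g}_{-k}=\mathfrak{g}_{-1}^{\,k}=[\mathfrak{g}_{-1},\mathfrak{g}_{-(k-1)}]$ for $k\geq 2$. Arguing by induction on $k$, suppose $\sigma|_{\mathfrak{g}_{-(k-1)}}=\mathrm{id}$, the case $k-1=1$ being part of the hypothesis. For $y\in\mathfrak{g}_{-1}$ and $z\in\mathfrak{g}_{-(k-1)}$, the relation (\ref{e3}) gives $\sigma[y,z]=[\sigma(y),\sigma(z)]=[y,z]$, and since such brackets span $\mathfrak{g}_{-k}$ we conclude $\sigma|_{\mathfrak{g}_{-k}}=\mathrm{id}$.

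Then I would treat the nonnegative part by induction on $i\geq 1$, exploiting transitivity to reduce an identity in degree $i$ to one in degree $i-1$. Assume $\sigma|_{\mathfrak{g}_j}=\mathrm{id}$ for all $0\leq j\leq i-1$, the base case $i-1=0$ being the hypothesis. Fix a homogeneous $x\in\mathfrak{g}_i$; since $\sigma$ is graded, $\sigma(x)-x\in\mathfrak{g}_i$. For any $y\in\mathfrak{g}_{-1}$ we have $\sigma(y)=y$, so (\ref{e3}) yields
$$[\sigma(x),y]=[\sigma(x),\sigma(y)]=\sigma[x,y].$$
But $[x,y]\in\mathfrak{g}_{i-1}$, where $\sigma$ is already the identity, whence $\sigma[x,y]=[x,y]$ and therefore $[\sigma(x)-x,y]=0$ for every $y\in\mathfrak{g}_{-1}$. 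Transitivity (\ref{e4}), applicable because $\sigma(x)-x\in\mathfrak{g}_i$ with $i\geq 0$, then forces $\sigma(x)-x=0$, completing the induction.

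Combining the two inductions over $\mathfrak{g}=\bigoplus_{i\geq -h}\mathfrak{g}_i$ gives $\sigma=\mathrm{id}|_{\mathfrak{g}}$. The argument is essentially mechanical, so I expect no genuine obstacle; the only point requiring care is the bookkeeping that guarantees, at each degree $i$, the bracket $[x,y]$ with $y\in\mathfrak{g}_{-1}$ lands in a strictly lower degree where the inductive hypothesis already applies. This is precisely what makes transitivity the load-bearing ingredient and the crux of the proof.
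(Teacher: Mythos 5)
Your handling of the strictly negative components is correct and is exactly the paper's argument: multiplicativity (\ref{e3}) together with $\mathfrak{g}_{-i}=\mathfrak{g}_{-1}^{i}$ propagates $\sigma=\mathrm{id}$ downward. The gap is in your positive part, at the step ``since $\sigma$ is graded, $\sigma(x)-x\in\mathfrak{g}_i$.'' Nothing in the definition of a Hom-Lie superalgebra structure, and nothing established in the paper, says that $\sigma$ preserves the $\mathbb{Z}$-degree; $\sigma$ is only an even linear map (graded with respect to the $\mathbb{Z}_2$-grading). That $\mathbb{Z}$-degree preservation is \emph{not} available is visible throughout the paper: Lemma \ref{00} concludes only that $\sigma(x)-x$ lies in $\oplus_{k\geq 1}\mathfrak{g}_{-k}$ (if $\sigma$ preserved $\mathbb{Z}$-degrees, this would instantly give $\sigma(x)=x$ and trivialize Propositions \ref{p4-2}, \ref{E5-p2} and \ref{E6-p2}); Proposition \ref{p4-1} needs transitivity just to show $\sigma(E(4,4)_{-1})\subseteq E(4,4)_{-1}$; and Proposition \ref{E5-p1} must allow $\sigma(\mathrm{d}_{ij})=\sum f_{mn}\mathrm{d}_{mn}$ with arbitrary power-series coefficients. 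So your step assumes, in substance, a large part of what the paper is set up to prove.

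Concretely, without degree preservation your induction only yields $[\sigma(x)-x,\mathfrak{g}_{-1}]=0$; decomposing $\sigma(x)-x$ into $\mathbb{Z}$-homogeneous components and applying transitivity (\ref{e4}) kills only the components of degree $\geq 0$, leaving a possible tail in $\oplus_{k\geq 1}\mathfrak{g}_{-k}$, exactly the situation of Lemma \ref{00}. This is also why your remark that the Hom-Jacobi identity (\ref{e2}) ``is not needed'' should have been a warning sign: the paper invokes (\ref{e2}) precisely to remove this tail. In the paper's proof, once $\sigma=\mathrm{id}$ on $\mathfrak{g}_{\leq 0}$ is known, identity (\ref{e2}) combined with the ordinary Jacobi identity gives $[\sigma(x),[y,z]]=[x,[y,z]]$ for all $y,z\in\mathfrak{g}_{\leq 0}$; since transitivity and irreducibility give $\mathfrak{g}_{-1}=[\mathfrak{g}_0,\mathfrak{g}_{-1}]$ and $\mathfrak{g}_{-k}=[\mathfrak{g}_{-1},\mathfrak{g}_{-k+1}]$, this produces the stronger conclusion $[\sigma(x)-x,\mathfrak{g}_{\leq 0}]=0$, and it is the brackets with $\mathfrak{g}_0$ (not just with $\mathfrak{g}_{-1}$) that annihilate the negative-degree components, the nonnegative ones being killed by transitivity. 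To repair your proof you would have to either first prove that $\sigma$ preserves the $\mathbb{Z}$-grading, which is not easier than the lemma itself, or reinstate the Hom-Jacobi step as the paper does.
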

\begin{proof}
Let $i\geq 1.$ Equation (\ref{e3}) and $\mathfrak{g}_{-i}=\mathfrak{g}_{-1}^i$  imply
\begin{eqnarray}\label{l0}
\sigma\mid_{\mathfrak{g}_{-i}}=\mathrm{id}\mid_{\mathfrak{g}_{-i}}.
\end{eqnarray}

Suppose $x\in \mathfrak{g}_{i},$ $y, z\in \mathfrak{g}_{\leq 0}=\oplus^{0}_{i=-h}\mathfrak{g}_i,$  by (\ref{e2}) and (\ref{l0}), one can deduce
$$[\sigma(x),[y,z]]=[x,[y,z]].$$
Since
$\mathfrak{g}$ is transitive and irreducible, then $\mathfrak{g}_{-1}=[\mathfrak{g}_0,\mathfrak{g}_{-1}]$. Together with Equation (\ref{e1}), it implies
$$[\sigma(x)-x,\mathfrak{g}_{\leq 0}]=0.$$ Then $\sigma(x)-x=0,$
$\sigma|_{\mathfrak{g}_{i}}=\mathrm{id}|_{\mathfrak{g}_{i}}.$
Thus, we have
$\sigma=\mathrm{id}|_{\mathfrak{g}},$  that is,
$\sigma$ is trivial.
\end{proof}
According to Lemma \ref{l1}, to study the Hom-Lie superalgebra structures on  five exceptional simple  Lie superalgebras,
we can begin with their $0$-$\mathrm{th}$ and $-1$-$\mathrm{st}$ components. In the remaining of this paper, we will directly use Equation (\ref{e2}) without further remarks.

\section {Hom-Lie superalgebra structures on  five exceptional simple Lie superalgebras}
\subsection{E(4,4)}

  According to the studies reported in \cite{SK}, there is a unique  irreducible gradation over $E(4,4),$ such that $E(4,4)$ is the only inconsistent gradated algebra(its $-1$-$\mathrm{st}$ component  is not purely odd) of  the five exceptional simple Lie superalgebras. The $0$-$\mathrm{th}$ component $E(4,4)_{0}$ is isomorphic to $\widehat{\mathrm{P}}(4),$
 which is the unique nontrivial center extension of $\mathrm{P}(4).$ As $ E(4,4)_0$-module, $ E(4,4)_{-1}$ is isomorphic to the nature module $\mathbb{F}^{4|4}.$ In the followings, we will use the notations  of  $E(4,4)$, which are introduced in \cite{five}.

 Let $C=(c_{ij})\in \mathfrak{gl}_4(\mathbb{F})$ be a skew-symmetric matrix. $\widetilde{C}=(\widetilde{c}_{ij})=(\varepsilon_{ijkl}c_{kl})$ stands for the Hodge dual of $C,$ where $\varepsilon_{ijkl}$ is the symbol of permutation $(1234)\mapsto(ijkl).$
For short, write $i':=i+4,$ $i=1,2,3,4.$  Fix a basis of $\widehat{P}(4):$
$\mathcal{A}\cup \mathcal{B}\cup\mathcal{C}\cup \mathcal{I},$  where
\begin{eqnarray*}
&&\mathcal{A}:=\{E_{ij}-E_{j'i'},\quad 1\leq i\neq j\leq 4\},\\
&&\mathcal{B}:=\{E_{ij'}+E_{ji'},\quad 1\leq i\leq j\leq 4\},\\
&&\mathcal{C}:=\{E_{i'j}-E_{j'i}-(\widetilde{E}_{i'j}-\widetilde{E}_{j'i}),\quad 1\leq i<j\leq 4\},\\
&&\mathcal{I}:=\{I,\;E_{ii}-E_{i+1,i+1}-(E_{i'i'}-E_{i'+1,i'+1}),\quad 1\leq i\leq 3 \},
\end{eqnarray*}
and $I$ is the unite matrix.

Suppose $E(4,4)_{-1}=\langle v_i,v_{i'}|i=1,\ldots,4\rangle,$ where $|v_i|=\bar{0},$ $|v_{i'}|=\bar{1}.$
\begin{proposition}\label{p4-1}
If $\sigma$ is a Hom-Lie superalgebra structure on $E(4,4),$ then
$$\sigma|_{E(4,4)_{-1}}=\mathrm{id}|_{E(4,4)_{-1}}.$$
\end{proposition}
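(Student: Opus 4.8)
The plan is to use that, because $\sigma$ is graded and (being a Hom-Lie superalgebra structure on a simple superalgebra) injective, it preserves every $\mathbb{Z}$-component. Write $\phi:=\sigma|_{E(4,4)_{-1}}$ and $\alpha:=\sigma|_{E(4,4)_0}$. Since $E(4,4)_0=\widehat{P}(4)$ and $E(4,4)_{-1}=\mathbb{F}^{4|4}$ are finite dimensional and $\sigma$ is even and injective, $\phi$ is an even bijection and, applying the multiplicativity axiom (\ref{e3}) to two elements of $E(4,4)_0$, $\alpha$ is an automorphism of $\widehat{P}(4)$. Applying (\ref{e3}) to $x\in E(4,4)_0$ and $v\in E(4,4)_{-1}$ gives the twisted equivariance
$$\phi([x,v])=[\alpha(x),\phi(v)],$$
which is the identity I would lean on. Because $\phi$ is even, it preserves the even part $\langle v_1,\dots,v_4\rangle$ and the odd part $\langle v_{1'},\dots,v_{4'}\rangle$ of $\mathbb{F}^{4|4}$ separately, so I may treat its matrix blockwise.

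First I would feed the toral/central elements $\mathcal{I}$ into this relation. The vectors $v_i,v_{i'}$ are weight vectors for the Cartan part of $\widehat{P}(4)$, and $\alpha$ must preserve the center $\langle I\rangle$ and the root-space structure; comparing weights on both sides of $\phi([x,v])=[\alpha(x),\phi(v)]$ for $x\in\mathcal{I}$ then forces the matrix of $\phi$ to be diagonal in the basis $\{v_i,v_{i'}\}$, i.e. $\phi$ respects the weight decomposition of $\mathbb{F}^{4|4}$.

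Next I would run the root vectors $\mathcal{A},\mathcal{B},\mathcal{C}$ through the same relation. Each such $x$ carries one weight line of $\mathbb{F}^{4|4}$ to another, so, with $\phi$ already diagonal, the equation $\phi([x,v])=[\alpha(x),\phi(v)]$ links the corresponding diagonal entries and at the same time pins down $\alpha$ on these generators; the upshot is that all diagonal entries coincide, so $\phi=c\,\mathrm{id}$ for a single scalar $c\in\mathbb{F}^{\times}$. The final step is to force $c=1$. Here I would pair $E(4,4)_{-1}$ with the positive part: applying (\ref{e3}) to some $v\in E(4,4)_{-1}$ and $w\in E(4,4)_1$ with $[v,w]$ a nonzero multiple of $I$ yields $\alpha([v,w])=[\phi(v),\sigma(w)]=c\,[v,\sigma(w)]$, a relation that is genuinely quadratic in $\sigma$; combining it with a Hom-Jacobi identity (\ref{e2}) on a triple spread across degrees $-1,0,1$ — whose three summands carry $\sigma|_{E(4,4)_{-1}}$, $\sigma|_{E(4,4)_0}$ and $\sigma|_{E(4,4)_1}$ respectively, and which is therefore \emph{not} invariant under $\phi\mapsto c\phi$ — forces $c=1$, giving $\sigma|_{E(4,4)_{-1}}=\mathrm{id}$. (This is exactly the base case that, together with the analogous result on $E(4,4)_0$, feeds Lemma \ref{l1}.)

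The step I expect to be the main obstacle is the last one, isolating the scalar. Since $\sigma$ is degree preserving, every term of any Hom-Jacobi identity (\ref{e2}) carries exactly one copy of $\sigma$, so any relation confined to a single pair of components is invariant under rescaling $\phi$; neither the module structure of $\mathbb{F}^{4|4}$ over $\widehat{P}(4)$ nor the equivariance $\phi([x,v])=[\alpha(x),\phi(v)]$ alone can separate $\mathrm{id}$ from $c\,\mathrm{id}$. One is thus compelled to bring in $E(4,4)_1$ and to control $\sigma|_{E(4,4)_1}$ well enough — through the transitivity (\ref{e4}) and the inconsistency of the gradation — to close the argument. By contrast, the weight-and-root bookkeeping of the second and third steps, although lengthy, is routine given the explicit basis $\mathcal{A}\cup\mathcal{B}\cup\mathcal{C}\cup\mathcal{I}$.
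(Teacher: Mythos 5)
Your proposal breaks down at exactly the step you flag as the main obstacle, and the diagnosis behind that step is incorrect. You assert that every relation confined to $E(4,4)_0$ and $E(4,4)_{-1}$ is invariant under the rescaling $\phi\mapsto c\,\phi$, so that one is ``compelled'' to bring in $E(4,4)_1$. This is false. In the Hom-Jacobi identity (\ref{e2}) applied to a triple $h,a\in E(4,4)_0$, $v_i\in E(4,4)_{-1}$, only the single term $[\sigma(v_i),[h,a]]$ carries $\phi$, while the other two terms carry $\sigma|_{E(4,4)_0}$; such an identity is therefore \emph{linear} in $\phi$, not invariant under its rescaling. The paper exploits precisely this: choosing $h\in\mathcal{I}$ and $a=E_{ji}-E_{j'i'}\in\mathcal{A}$ with $[h,v_i]=0$, $[h,v_j]=v_j$, $[a,v_i]=v_j$, identity (\ref{e2}) gives $[\sigma(h),v_j]=-[\sigma(v_i),a]=\lambda_i v_j$, a quantity that scales with $\phi$. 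On the other hand, multiplicativity (\ref{e3}) combined with injectivity of $\sigma$ evaluates the same bracket in a scale-invariant way: since $\sigma(v_j)=\lambda_j v_j$ with $\lambda_j\neq 0$, one has $v_j=\sigma(\lambda_j^{-1}v_j)$, hence $[\sigma(h),v_j]=\sigma[h,\lambda_j^{-1}v_j]=\lambda_j^{-1}\sigma(v_j)=v_j$. Comparing the two evaluations forces $\lambda_i=1$, entirely within degrees $0$ and $-1$. Your detour through $E(4,4)_1$ is thus unnecessary, and--more importantly--it is never carried out: you give no control of $\sigma|_{E(4,4)_1}$, so your argument does not close.

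There is a second gap at the outset: you take for granted that $\sigma$ preserves each $\mathbb{Z}$-component, so that $\alpha=\sigma|_{E(4,4)_0}$ is an automorphism of $\widehat{P}(4)$. The axioms (\ref{e3})--(\ref{e2}) do not give this, and the paper never uses it: it \emph{proves} $\sigma(E(4,4)_{-1})\subseteq E(4,4)_{-1}$ by first deriving $[\sigma(E(4,4)_{-1}),E(4,4)_{-1}]=0$ from (\ref{e2}) and then invoking transitivity (\ref{e4}); for degree zero it obtains only (Lemma \ref{00}) that $\sigma(x)-x$ lies in negative degrees, which is exactly why Proposition \ref{p4-2} needs a separate argument. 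Without $\sigma(E(4,4)_0)\subseteq E(4,4)_0$, your $\alpha$ need not even be an endomorphism of $\widehat{P}(4)$, and the representation-theoretic step collapses. That step is also sketchy on its own terms: an automorphism $\alpha$ need not fix a Cartan subalgebra or its weights pointwise, so ``comparing weights'' yields at best that $\phi$ permutes weight lines (e.g.\ a permutation matrix composed with a diagonal one), not that $\phi$ is diagonal; ruling out the permutation part requires exactly the kind of explicit bracket computations with $\mathcal{A}$, $\mathcal{B}$, $\mathcal{C}$, $\mathcal{I}$ that the paper performs.
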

\begin{proof}
Let $i=1,\ldots,8,$  $k,j=1,\ldots,4$ and $i\neq k',$   $j\neq i.$ By using Equation (\ref{e2}), one can deduce
\begin{eqnarray} \label{E4-1}
[\sigma(v_i),v_k]=[\sigma(v_i),[E_{kj}-E_{j'k'},v_j]]=0.
\end{eqnarray}
Similarly, one has
\begin{eqnarray}\label{E4-2}
[\sigma(v_i),v_{k'}]&=&-[\sigma(v_i),[E_{jk}-E_{k'j'},v_{j'}]]=0
\end{eqnarray}
and
$$[\sigma(v_k),v_{k'}]=-[\sigma(v_k),[E_{k'j}-E_{j'k}-(\widetilde{E}_{k'j}-\widetilde{E}_{j'k}),v_{j}]]=-[\sigma(v_j),v_{j'}].$$
The arbitrariness of $k,j$  in the last equation shows
\begin{eqnarray}\label{E4-3}
[\sigma(v_i),v_{i'}]=0,\quad i=1,\ldots,4.
\end{eqnarray}
It follows from Equations (\ref{E4-1})-(\ref{E4-3}) that
$$
[\sigma(E(4,4)_{-1}),E(4,4)_{-1}]=0.
$$
Using the transitivity (\ref{e4}), one gets $\sigma(E(4,4)_{-1})\subseteq E(4,4)_{-1}.$
Recall that $|\sigma|=\bar{0},$ one may suppose
$$\sigma(v_i)=\sum_{m=1}^8\lambda_mv_m,\lambda_{m},\in \mathbb{F}.$$
Now, for distinct $i,j,k,l=1,2,3,4,$ suppose $(1234)\mapsto(ijkl)$ is an even permutation and
$$c=E_{k'j}-E_{j'k}-(E_{li'}-E_{il'})\in \mathcal{C},\;
a=E_{jl}-E_{l'j'}\in \mathcal{A}.$$
Obviously, $[a,v_i]=0,$ $[v_i,c]=0.$ Then
\begin{eqnarray*}
0=[\sigma(v_i),[c,a]]
=[\sigma(v_i),E_{k'l}-E_{l'k}-(E_{ij'}-E_{ji'})]
=-\lambda_lv_{k'}+\lambda_kv_{l'}.
\end{eqnarray*}
Hence, $\lambda_k=\lambda_l=0$, and then $\sigma(v_i)=\lambda_iv_i,$ $i=1,2,3,4.$

The equation
\begin{eqnarray*}
0=[\sigma(v_{i'}),[E_{kj}-E_{j'k'},E_{jl'}+E_{lj'}]]=[\sigma(v_{i'}),E_{kl'}+E_{lk'}]=\lambda_{l'}v_{k}+\lambda_{k'}v_{l},
\end{eqnarray*}
implies $\lambda_{k'}=\lambda_{l'}=0.$  One has $\sigma(v_{i'})=\lambda_{i'}v_{i'}.$

At last, let us prove $\sigma(v_i)=v_i$ and $\sigma(v_{i'})=v_{i'}.$
For  distinct $i,j,k,$ put $$h=E_{jj}-E_{j+1,j+1}-(E_{kk}-E_{k+1,k+1})\in\mathcal{I},\;a=E_{ji}-E_{j'i'}\in\mathcal{A}.$$
Clearly,
\begin{eqnarray}\label{E4-4}
[\sigma(h),v_{j}]=[\sigma(h),[a,v_{i}]]=
-[\sigma(v_{i}),a]=
\lambda_iv_j.
\end{eqnarray}
Recall that $\sigma$ is monomorphic, one may suppose $\sigma^{-1}$ is a left linear inverse of $\sigma,$ then
$$\sigma^{-1}[\sigma(h),v_{j}]=[h,\sigma^{-1}(v_j)]=\lambda_j^{-1}[h,v_j]=\lambda_j^{-1}v_j.$$
It implies that
\begin{eqnarray}\label{E4-5}
[\sigma(h),v_j]=\sigma(\lambda_j^{-1}v_j)=v_j.
\end{eqnarray}
Comparing (\ref{E4-4}) with (\ref{E4-5}), one gets $\lambda_i=1.$ Analogously, one may check $\sigma(v_{i'})=v_{i'}.$

Summing up the above, we have proved $\sigma|_{E(4,4)_{-1}}=\mathrm{id}|_{E(4,4)_{-1}}.$
\end{proof}

\begin{proposition}\label{p4-2}
If $\sigma$ is a  Hom-Lie superalgebra structure on $E(4,4),$ then
$$\sigma|_{E(4,4)_{0}}=\mathrm{id}|_{E(4,4)_{0}}.$$
\end{proposition}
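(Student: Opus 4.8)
The plan is to localize the discrepancy $\sigma(x)-x$ in the bottom component and then kill it. Since Proposition \ref{p4-1} already gives $\sigma|_{E(4,4)_{-1}}=\mathrm{id}$, Lemma \ref{00} applies directly. Because the nonpositive part of $E(4,4)$ is the pair $(E(4,4)_{-1},E(4,4)_0)=(\mathbb{F}^{4|4},\widehat{P}(4))$, i.e. $h=1$ and there is no component below degree $-1$, Lemma \ref{00} yields $\sigma(x)=x+\phi(x)$ with $\phi(x)\in E(4,4)_{-1}$ for every $x\in E(4,4)_0$; evenness of $\sigma$ gives $|\phi(x)|=|x|$. Everything then comes down to proving $\phi\equiv 0$.

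Next I would determine the shape of $\phi$ from the central element $I\in\mathcal{I}$. As $I$ is central in $E(4,4)_0$ while $E(4,4)_{-1}$ is an irreducible $E(4,4)_0$-module, transitivity (\ref{e4}) rules out $[I,E(4,4)_{-1}]=0$, so $I$ acts on $E(4,4)_{-1}$ as a nonzero scalar $c$. Feeding $0=\sigma[I,x]=[\sigma(I),\sigma(x)]$ into the multiplicativity (\ref{e3}) and deleting the term $[\phi(I),\phi(x)]\in[E(4,4)_{-1},E(4,4)_{-1}]=0$, one is left with $c\,\phi(x)=[x,\phi(I)]$, that is, $\phi(x)=[x,M]$ for the fixed even vector $M:=c^{-1}\phi(I)\in E(4,4)_{-1}$. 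So $\phi$ is an inner (coboundary) correction and the task is reduced to showing $M=0$.

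The crux, and the main obstacle, is that $\phi$ cannot be detected from $E(4,4)_{\le 0}$ at all: since $\phi$ is valued in $E(4,4)_{-1}$ and $[E(4,4)_{-1},E(4,4)_{-1}]=0$, every instance of (\ref{e3}) or (\ref{e2}) using only elements of $E(4,4)_{\le 0}$ degenerates into an identity satisfied by $\phi=[\,\cdot\,,M]$ for \emph{every} $M$. To see $M$ one must involve the positive part $E(4,4)_1$. I would therefore apply the Hom--Jacobi identity (\ref{e2}) to the triple $(p,I,v)$ with $p\in E(4,4)_1$ and $v\in E(4,4)_{-1}$. Using $[I,v]=cv$, the centrality $[I,[v,p]]=0$, the multiplicativity consequence $[\sigma(p),v]=\sigma[p,v]=[p,v]+[[p,v],M]$ together with the super-Jacobi reduction $[[p,v],M]=[[p,M],v]$ (valid because $[v,M]=0$), and $\sigma(I)=I+cM$, the degree $-1$ part of (\ref{e2}) collapses to $[[p,M],v]=0$ for all $v\in E(4,4)_{-1}$. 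Transitivity (\ref{e4}) then forces $[p,M]=0$ for every $p\in E(4,4)_1$.

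Finally I would argue that $N:=\{m\in E(4,4)_{-1}:[E(4,4)_1,m]=0\}$ is an $E(4,4)_0$-submodule: for $u\in E(4,4)_0$, $m\in N$, $p\in E(4,4)_1$, super-Jacobi and $[E(4,4)_1,E(4,4)_0]\subseteq E(4,4)_1$ give $[p,[u,m]]=[[p,u],m]=0$, so $[u,m]\in N$. This submodule is proper, since $N=E(4,4)_{-1}$ would mean $[E(4,4)_1,E(4,4)_{-1}]=0$ and hence $E(4,4)_1=0$ by transitivity (\ref{e4}), contradicting the infinite-dimensionality of $E(4,4)$. Irreducibility of $E(4,4)_{-1}$ then forces $N=0$, whence $M=0$, $\phi\equiv 0$, and $\sigma|_{E(4,4)_0}=\mathrm{id}$. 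One could instead emulate the proof of Proposition \ref{p4-1} and run (\ref{e2}) on explicit triples drawn from $\mathcal{A}\cup\mathcal{B}\cup\mathcal{C}\cup\mathcal{I}$, pinning $\phi$ to zero on each basis vector, but the module-theoretic route above avoids the case analysis.
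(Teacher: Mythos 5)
Your proof is correct, and it takes a genuinely different route from the paper's. After the common reduction (Proposition \ref{p4-1} plus Lemma \ref{00} give $\sigma(x)=x+\phi(x)$ with $\phi(x)\in E(4,4)_{-1}$), the paper stays entirely inside $E(4,4)_{\le 0}$: it reduces to generators $x\in\mathcal{B}\cup\mathcal{C}$, writes $\phi(x)$ in coordinates, and kills the coefficients one at a time by feeding explicit triples of basis elements of $E(4,4)_0$ into (\ref{e2}). You instead exploit the structure of $\widehat{P}(4)$: Schur's lemma and transitivity make the central element $I$ act on the irreducible module $E(4,4)_{-1}$ by a nonzero scalar $c$; multiplicativity (\ref{e3}) applied to $[I,x]=0$ collapses the whole unknown to a single even vector, $\phi(x)=[x,M]$ with $M=c^{-1}\phi(I)$; one instance of (\ref{e2}) on $(p,I,v)$ with $p\in E(4,4)_1$, $v\in E(4,4)_{-1}$ yields, in its degree $-1$ component, $2c\,[[p,v],M]=0$, hence $[p,M]=0$ by the Jacobi flip and transitivity (\ref{e4}); and the annihilator $N$ of $E(4,4)_1$ in $E(4,4)_{-1}$ is a proper submodule, hence zero. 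I checked the sign bookkeeping; it is right. Two points deserve an explicit line: $M$ is even, so Schur applies to the even endomorphism $\mathrm{ad}\,I$; and ``$E(4,4)_1=0$ contradicts infinite-dimensionality'' needs the one-line induction $[\mathfrak{g}_i,\mathfrak{g}_{-1}]\subseteq\mathfrak{g}_{i-1}=0\Rightarrow\mathfrak{g}_i=0$ for all $i\ge 1$. What your route buys is a short, coordinate-free argument that works verbatim for any infinite-dimensional transitive, irreducible, depth-one graded Lie superalgebra whose zeroth component has nonzero center; what the paper's route buys is that it never needs to invoke the positive part of the algebra, at the price of lengthy matrix computations.

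One assertion you make is wrong, though it is purely motivational and does not damage the proof: the claim that $\phi=[\,\cdot\,,M]$ satisfies every instance of (\ref{e2}) with arguments in $E(4,4)_{\le 0}$, so that $M$ ``cannot be detected'' without $E(4,4)_1$. This is true for (\ref{e3}), since on the subalgebra $E(4,4)_{\le 0}$ your $\sigma$ is the automorphism $\mathrm{id}-\mathrm{ad}\,M$ (note $(\mathrm{ad}\,M)^2=0$ there), but it fails for (\ref{e2}): for $x,y,z\in E(4,4)_0$ the Hom-Jacobi identity reduces to $\sum_{\mathrm{cyc}}\pm[[x,M],[y,z]]=0$, which is a nontrivial linear condition on $M$. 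Concretely, take $h=E_{11}-E_{22}-(E_{1'1'}-E_{2'2'})\in\mathcal{I}$, $a=E_{12}-E_{2'1'}\in\mathcal{A}$, $a'=E_{21}-E_{1'2'}\in\mathcal{A}$, and $M=v_1$. Then $[a,a']=h$, $[h,a]=2a$, $[a',h]=2a'$, $[h,M]=v_1$, $[a,M]=0$, $[a',M]=v_2$, and the cyclic sum in (\ref{e2}) for the triple $(h,a,a')$ equals $[v_1,h]+[v_2,2a]=-3v_1\neq 0$. So Hom-Jacobi triples inside $E(4,4)_0$ do detect $M$ --- which is exactly how the paper's own proof succeeds without ever mentioning $E(4,4)_1$. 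Delete or correct that sentence; the rest of your proof stands as written.
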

\begin{proof}
Put $x\in E(4,4)_0,$ $v\in E(4,4)_{-1}.$ Using Propersition \ref{p4-1} and Lemma \ref{00}, one can deduce
 $\sigma(x)-x\in E(4,4)_{-1}.$
Note that $\mathcal{A}$ and $\mathcal{I}$ are generated(Lie product) by
$\mathcal{B}$ and $\mathcal{C},$ it is sufficient to prove the cases for $x\in \mathcal{B}$ and $x\in \mathcal{C}.$

$\mathbf{Case\ 1:}$
Let $x=E_{ij'}+E_{ji'}\in\mathcal{B}.$ Noting that $|\sigma|=\bar{0},$ one may suppose $$\sigma(x)=x+\sum_{m=1}^4\lambda_{m'}v_{m'},\quad\lambda_{m'}\in \mathbb{F}.$$
Put $$c=E_{k'l}-E_{l'k}-(\widetilde{E}_{k'l}-\widetilde{E}_{l'k})\in\mathcal{C},\quad b=E_{kk'}\in\mathcal{B},$$
where $k,l\neq i,j.$
 Then
\begin{eqnarray*}
0=[\sigma(x),[b,c]]=\left[E_{ij'}+E_{ji'}+\sum_{m=1}^4\lambda_{m'}v_{m'},E_{kl}-E_{l'k'}\right]=\lambda_{k'}v_{l'}.
\end{eqnarray*}
It implies that $\lambda_{k'}=0,$ $k\neq i,j.$ Then,
\begin{eqnarray}\label{E4-6}
\sigma(x)=x+\lambda_{i'}v_{i'}+\lambda_{j'}v_{j'}.
\end{eqnarray}
Now, put
$$
c=E_{i'k}-E_{k'i}-(\widetilde{E}_{i'k}-\widetilde{E}_{k'i'})\in\mathcal{C},\quad b=E_{ii'}\in\mathcal{B},\quad k\neq i,j.
$$
Using Equation (\ref{E4-6}), one may suppose $\sigma(b)=b+\mu_{i'}v_{i'}, \mu_{i'}\in \mathbb{F},$ then
\begin{eqnarray*}
0=[\sigma(x),a]=[E_{ij'}+E_{ji'}+\lambda_{i'}v_{i'}+\lambda_{j'}v_{j'},E_{ik}-E_{k'i'}]=\lambda_{i'}v_{k'}.
\end{eqnarray*}
So $\lambda_{i'}=0.$  That is, $\sigma(x)=x+\lambda_{j'}v_{j'}.$
Similarly, put
$$c=E_{j'k}-E_{k'j}-(\widetilde{E}_{j'k}-\widetilde{E}_{k'j'})\in\mathcal{C},\quad b=E_{jj'}\in\mathcal{B}.$$
One may obtain $\lambda_{j'}=0.$ Thus, we proved  $\sigma(x)=x$ for any $x\in \mathcal{B}.$

$\mathbf{Case\ 2:}$
Let $x=E_{i'j}-E_{j'i}-(E_{kl'}-E_{lk'})\in\mathcal{C},$ where $\varepsilon_{ijkl}=1.$ Noting that $|\sigma|=\bar{0},$ one may assume  $\sigma(x)=x+\sum_{m=1}^4\mu_{m'}v_{m'},$ $\mu_{m'}\in\mathbb{F}.$

Firstly, put $$c=E_{k'l}-E_{l'k}-(E_{ji'}-E_{ij'})\in\mathcal{C},\quad b=E_{kk'}\in\mathcal{B},$$
where $\varepsilon_{ijkl}=1.$
Equation (\ref{e2}) and the result $\sigma(b)=b$ obtained in Case 1 imply
$$0=[\sigma(x),[c,b]]=[x+\sum_{m=1}^4\mu_{m'}v_{m'},E_{kl}-E_{l'k'}]=c_0+\mu_{k'}v_{l'},\quad c_0\in\mathcal{C}.$$
The equation above shows $\mu_{k'}=0.$  By the arbitrariness of $k\neq i,j,$ one has
\begin{eqnarray}\label{E4-7}
\sigma(x)=x+\mu_{i'}v_{i'}+\mu_{j'}v_{j'}.
\end{eqnarray}

Secondly, put $$c=E_{l'j}-E_{j'l}-(E_{ik'}-E_{k'i})\in\mathcal{C},\quad b=E_{jj'}.$$
 Using Equation (\ref{E4-7}), one may suppose
$$\sigma(c)=c+\gamma_{l'}v_{l'}+\gamma_{j'}v_{j'},\quad \gamma_{l'},\gamma_{j'}\in\mathbb{F}.$$  On one hand,
\begin{eqnarray*}
[\sigma(x),[b,c]]=-[\sigma(b),[c,x]]-[\sigma(c),[x,b]]=E_{l'i}-E_{i'l}-(E_{kj'}-E_{jk'})-\gamma_{j'}v_{i'}.
\end{eqnarray*}
On the other hand,
\begin{eqnarray*}
[\sigma(x),[b,c]]&=&[E_{i'j}-E_{j'i}-(E_{kl'}-E_{lk'})+\mu_{i'}v_{i'}+\mu_{j'}v_{j'},E_{l'j'}-E_{jl}]\\
&=&E_{l'i}-E_{i'l}-(E_{kj'}-E_{jk'})-\mu_{j'}v_{l'}.
\end{eqnarray*}
Since $i\neq l,$ one has $\gamma_{j'}=\mu_{j'}=0.$  Then $\sigma(x)=x+\lambda_{i'}v_{i'}.$

At last, put
$$c=E_{i'k}-E_{k'i}-(E_{lj'}-E_{jl'})\in\mathcal{C},\quad b=E_{ii'}\in\mathcal{B}.$$
We can obtain $\lambda_{i'}=0$ in the same way. Thus,  $\sigma(x)=x$ is proved.

Summing up, we have proved  $\sigma(x)=x$ for any $x\in E(4,4)_0,$ that is, $\sigma|_{E(4,4)_{0}}=\mathrm{id}|_{E(4,4)_0}.$
\end{proof}

\subsection{ E(3,6), E(5,10) and E(3,8)}
Before studying the Hom-Lie superalgebra structures on $E(3,6), E(5,10)$ and $ E(3,8),$  we would like to review their algebraic structures briefly
 \cite{K1,SK,E36,E38}.
As we know in \cite{K1}, the even part $E(5,10)_{\bar{0}}$ of $E(5,10)$ is isomorphic to the Lie algebra $S_5$, which is of divergence 0 polynomial vector fields on $\mathbb{F}^5,$  i.e., polynomial vector fields annihilating the volume form $\mathrm{d}x_1\wedge \mathrm{d}x_2\wedge \ldots \wedge \mathrm{d}x_5.$ As $S_5$-module, the odd part $E(5,10)_{\bar{1}}$ of $E(5,10)$ is isomorphic to $ \mathrm{d}\Omega^1(5),$ the space of closed polynomial differential 2-forms on $\mathbb{F}^5.$
Next, we keep the notations in \cite{E36}:
$$\mathrm{d}_{ij}:=\mathrm{d}x_i\wedge \mathrm{d}x_j, \quad \partial_i:=\partial/\partial x_i.$$
One may denote  $D\in E(5,10)_{\bar{0}}$ through
$$D=\sum_{i=1}^5f_i\partial_i,\quad \mbox{where}\; f_i\in \mathbb{F}[[x_1,x_2,\dots, x_5]], \quad \sum_{i=1}^5\partial_i(f_i)=0,$$
and denote $E\in E(5,10)_{\bar{1}}$ through
\begin{eqnarray}\label{E5-x1}
E=\sum_{i,j=1}^5f_{ij}\mathrm{d}_{ij},\quad \mbox{where}\; f_{ij}\in \mathbb{F}[[x_1,x_2,\dots, x_5]], \quad \mathrm{d}E=0.
\end{eqnarray}
The bracket in $E(5,10)_{\bar{1}}$ is defined by
$$[f\mathrm{d}_{ij},g\mathrm{d}_{kl}]=\varepsilon_{tijkl}fg\partial_t,$$
where $\varepsilon_{tijkl}$ is the sign of permutation $(tijkl)$ when $\{tijkl\}=\{12345\}$ and zero otherwise.
The bracket of  $E(5,10)_{\bar{0}}$ with $E(5,10)_{\bar{1}}$ is defined by the usual action of vector fields on differential forms.
The irreducible consistent $\mathbb{Z}$-gradation over $E(5,10)$ is defined by letting  (see \cite{E36})
$$\mathrm{zd}(x_i)=2,\quad \mathrm{zd}(\mathrm{d})=-\frac{5}{2},\quad \mathrm{zd}(\mathrm{d}x_i)=-\frac{1}{2}.$$
Then  $E(5,10)=\oplus_{i\geq -2}\mathfrak{g}_i,$
where
\begin{eqnarray*}
&&\mathfrak{g}_0 \simeq \mathfrak{sl}(5)=\langle x_i\partial_j,\; x_k\partial_k-x_{k+1}\partial_{k+1}\;|\;i,j=1,\ldots, 5, i\neq j,\;k=1,\ldots,4\rangle,\\
&&\mathfrak{g}_{-1}\simeq \wedge^2\mathbb{F}^5=\langle\mathrm{d}_{ij}\;|\;1\leq i< j\leq 5\rangle,\\
&&\mathfrak{g}_{-2}\simeq  \mathbb{F}^{5^*}=\langle \partial_i \;|\; i=1,\ldots,5\rangle.
\end{eqnarray*}

The exceptional simple Lie superalgebra $E(3,6)$ is a subalgebra of  $E(5,10).$  The irreducible consistent $\mathbb{Z}$-gradation over $E(3,6)$ is induced by the  $\mathbb{Z}$-gradation  of $E(5,10)$ above. Let $h_1=x_1\partial_1-x_2\partial_2,$ $ h_2=x_2\partial_2-x_3\partial_3,$ $h_3=x_4\partial_4-x_5\partial_5,$ $h_4=-x_2\partial_2-x_3\partial_3+2x_5\partial_5.$ Then
$E(3,6)=\oplus_{i\geq -2}\mathfrak{g}_i,$
where
\begin{eqnarray*}
\mathfrak{g}_0 &\simeq &\mathfrak{gl}(3)\oplus\mathfrak{sl}(2)\\
& =&\langle x_i\partial_j,\; x_k\partial_l,\;h_m\;|\;i,j=1,2,3, i\neq j,\;k,l=4,5,\; k\neq l,\;m=1,\ldots,4\rangle,\\
\mathfrak{g}_{-1}&\simeq & \mathbb{F}^3\otimes \mathbb{F}^2=\langle\mathrm{d}_{ij}\;|\;i=1,2,3,\;  j=4,5\rangle,\\
\mathfrak{g}_{-2}&\simeq & \mathbb{F}^{3^*}=\langle \partial_i \;| \;i=1,2,3\rangle.
\end{eqnarray*}

The  exceptional simple Lie superalgebra $E(3,8)$, which is strikingly similar to $E(3,6),$ carries a unique irreducible consistent $\mathbb{Z}$-gradation\cite{E38} defined by
$$\mathrm{zd}(x_i)=-\mathrm{zd}(\partial_i)=\mathrm{zd}(\mathrm{d}x_i)=2,\; i=1,2,3;\;\quad \mathrm{zd}(x_4)=\mathrm{zd}(x_5)=-3.$$ Then $E(3,8)=\oplus_{i\geq -3}\mathfrak{g}_i,$
where
\begin{eqnarray}\label{E36-E38}
\mathfrak{g}_j=E(3,6)_j,\quad  j=0,-1,-2; \quad \mathfrak{g}_{-3}\simeq  \mathbb{F}^{2}\simeq \langle \mathrm{d}x_4, \mathrm{d}x_5\rangle.
\end{eqnarray}

Hereafter,  $\mathfrak{g}$  denotes $E(5,10),$ $E(3,6)$ or $E(3,8)$  unless otherwise noted.
We establish a technical lemma, which can be verified directly.

\begin{lemma}\label{E5-l1}
If $\sigma$ is a Hom-Lie superalgebra structure on $\mathfrak{g},$ then
\begin{itemize}
\item[$\mathrm{(1)}$]
$
[\sigma(\mathrm{d}_{ij}),\mathrm{d}_{il}]=\left\{\begin{array}{ll}
0,&\quad\mbox{if}\quad \mathrm{d}_{ij},\mathrm{d}_{il}\in E(5,10),\\
-[\sigma(\mathrm{d}_{il}),\mathrm{d}_{ij}],&\quad\mbox{if}\quad
\mathrm{d}_{ij},\mathrm{d}_{il}\in E(3,6)\mbox{or }E(3,8);
\end{array}\right.
$
\item[$\mathrm{(2)}$]
$[\sigma(\mathrm{d}_{ij}),\mathrm{d}_{kj}]=0,\;$ in particular, $[\sigma(\mathrm{d}_{ij}),\mathrm{d}_{ij}]=0;$
\item[$\mathrm{(3)}$]
$[\sigma(\mathrm{d}_{ij}),\mathrm{d}_{kl}]=[\mathrm{d}_{ij},\sigma(\mathrm{d}_{kl})]$ for distinct$\;i,j,k,l.$
\end{itemize}
\end{lemma}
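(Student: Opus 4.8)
The plan is to verify the three identities by applying the Hom-super-Jacobi identity~(\ref{e2}) to carefully chosen triples, so that the right-hand side collapses to a single bracket that we can compute explicitly. The central tool is the bracket rule $[f\mathrm{d}_{ij},g\mathrm{d}_{kl}]=\varepsilon_{tijkl}fg\partial_t$ for $\mathfrak{g}_{-1}\times\mathfrak{g}_{-1}\to\mathfrak{g}_{-2}$, which vanishes precisely when the index sets $\{i,j\}$ and $\{k,l\}$ overlap (in $E(5,10)$) or when $\{t,i,j,k,l\}\neq\{1,2,3,4,5\}$. Since $\sigma$ is graded and even, $\sigma(\mathrm{d}_{ij})\in\mathfrak{g}_{-1}$, so every bracket appearing in~(1)--(3) again lands in $\mathfrak{g}_{-2}$ and is governed by the same combinatorial sign rule.

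First I would prove~(2). Apply~(\ref{e2}) to the triple $x=\mathrm{d}_{ij}$, $y=\mathrm{d}_{kj}$, together with a degree-zero element $z\in\mathfrak{g}_0$ chosen so that the two ``mixed'' terms $[\sigma(y),[z,x]]$ and $[\sigma(z),[x,y]]$ drop out; more simply, since $[\mathrm{d}_{ij},\mathrm{d}_{kj}]=0$ already (the index $j$ is shared, so $\varepsilon_{tijkj}=0$), one observes that $[\sigma(\mathrm{d}_{ij}),\mathrm{d}_{kj}]$ is forced to vanish because $\sigma(\mathrm{d}_{ij})$ is a linear combination of $\mathrm{d}_{pq}$'s and the bracket of each such term against $\mathrm{d}_{kj}$ carries the repeated index $j$. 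The special case $k=i$ gives $[\sigma(\mathrm{d}_{ij}),\mathrm{d}_{ij}]=0$ immediately. Part~(1) is the $l$-analogue of~(2): in $E(5,10)$ the shared first index $i$ again makes $[\mathrm{d}_{ij},\mathrm{d}_{il}]=0$ and the same argument yields $[\sigma(\mathrm{d}_{ij}),\mathrm{d}_{il}]=0$; in $E(3,6)$ and $E(3,8)$, however, the index ranges are split (first index in $\{1,2,3\}$, second in $\{4,5\}$), so the clean vanishing fails and instead one gets the skew relation $[\sigma(\mathrm{d}_{ij}),\mathrm{d}_{il}]=-[\sigma(\mathrm{d}_{il}),\mathrm{d}_{ij}]$ by feeding the triple $(\mathrm{d}_{ij},\mathrm{d}_{il},z)$ into~(\ref{e2}) with an appropriate $z$ that annihilates the cross terms, and using~(\ref{e3}) together with $\sigma|_{\mathfrak{g}_{-1}}$ being even and graded.

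For~(3), with $i,j,k,l$ distinct, I would write down~(\ref{e2}) for a triple built from $\mathrm{d}_{ij}$ and $\mathrm{d}_{kl}$ and a bracket-generating element, arrange that the antisymmetry~(\ref{e3}) converts $[\sigma(\mathrm{d}_{ij}),\mathrm{d}_{kl}]$ and $[\sigma(\mathrm{d}_{kl}),\mathrm{d}_{ij}]$ into one another, and then invoke the graded skew-symmetry of the bracket to rewrite $[\sigma(\mathrm{d}_{kl}),\mathrm{d}_{ij}]$ as $\pm[\mathrm{d}_{ij},\sigma(\mathrm{d}_{kl})]$; tracking the $\mathbb{Z}_2$-degrees of $\mathrm{d}_{ij}$ and $\sigma(\mathrm{d}_{kl})$ through the sign $(-1)^{|x||y|}$ produces exactly the stated equality $[\sigma(\mathrm{d}_{ij}),\mathrm{d}_{kl}]=[\mathrm{d}_{ij},\sigma(\mathrm{d}_{kl})]$. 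The main obstacle I anticipate is purely bookkeeping rather than conceptual: in each case one must choose the third (degree-zero) element $z$ so that two of the three cyclic terms in~(\ref{e2}) vanish identically, and one must keep the Levi--Civita signs $\varepsilon_{tijkl}$ and the parity signs $(-1)^{|x||y|}$ consistent across the three algebras, whose gradations differ. Because the lemma is asserted to be ``verified directly,'' I would present these as short index computations rather than a structural argument, checking the $E(5,10)$ case first and then indicating the minor modifications needed for the split index ranges of $E(3,6)$ and $E(3,8)$.
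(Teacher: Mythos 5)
The paper gives no written proof of this lemma (it is asserted to be ``verified directly''), so your attempt must stand on its own. Your opening strategy --- apply (\ref{e2}) to triples chosen so that the terms involving unknown $\sigma$-images are multiplied by vanishing brackets --- is indeed the right one, and is clearly what the authors intend. But the argument you actually commit to for (1) and (2) is wrong. Your ``more simply'' justification of (2) claims $[\sigma(\mathrm{d}_{ij}),\mathrm{d}_{kj}]=0$ because every term of $\sigma(\mathrm{d}_{ij})$, expanded in the $\mathrm{d}_{pq}$, brackets against $\mathrm{d}_{kj}$ through a repeated index. That is false: only terms with $\{p,q\}\cap\{k,j\}\neq\emptyset$ die, while for example $[\mathrm{d}_{45},\mathrm{d}_{32}]=\pm\partial_1$, so a hypothetical component $f_{45}\mathrm{d}_{45}$ of $\sigma(\mathrm{d}_{12})$ would contribute. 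Ruling out precisely such components is the content of Proposition \ref{E5-p1}, which is proved \emph{from} this lemma, so your shortcut assumes the conclusion. Moreover, at this stage $\sigma(\mathrm{d}_{ij})$ is not known to lie in $\mathfrak{g}_{-1}$ at all: the paper only uses that $\sigma$ is even, so $\sigma(\mathrm{d}_{ij})$ is a general odd element (a closed $2$-form with power-series coefficients); if $\mathbb{Z}$-gradedness of $\sigma$ could be assumed, most of Proposition \ref{E5-p1} would be unnecessary. A correct verification of (1)--(2) needs no expansion of $\sigma(\mathrm{d}_{ij})$ whatsoever: take $x=z=\mathrm{d}_{ij}$ and $y=x_k\partial_i$ in (\ref{e2}); then $[z,x]=0$, $[y,z]=\mathrm{d}_{kj}$, $[x,y]=-\mathrm{d}_{kj}$, and the identity collapses to $2[\sigma(\mathrm{d}_{ij}),\mathrm{d}_{kj}]=0$. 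The choices $y=x_l\partial_j$ and $y=x_i\partial_i-x_m\partial_m$ ($m\neq i,j$) handle part (1) and the case $k=i$ the same way.

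For (3) your plan stalls at an obstacle it never names: for distinct $i,j,k,l$ one has $[\mathrm{d}_{ij},\mathrm{d}_{kl}]=\pm\partial_t\neq 0$, so in \emph{any} single application of (\ref{e2}) to a triple containing both $\mathrm{d}_{ij}$ and $\mathrm{d}_{kl}$, the cyclic term $[\sigma(w),[\mathrm{d}_{ij},\mathrm{d}_{kl}]]$ survives with $\sigma(w)$ unknown, and no choice of the third element $w$ makes its coefficient vanish. One cannot simply ``arrange that the antisymmetry converts'' one bracket into the other; a two-step detour through auxiliary index pairs is needed. For instance, the triple $(\mathrm{d}_{ij},\,x_k\partial_j,\,\mathrm{d}_{jl})$ gives $[\sigma(\mathrm{d}_{ij}),\mathrm{d}_{kl}]=-[\sigma(\mathrm{d}_{jl}),\mathrm{d}_{ik}]$, because there the $\sigma(\mathfrak{g}_0)$-term is multiplied by $[\mathrm{d}_{jl},\mathrm{d}_{ij}]=0$; applying the same relation with relabelled indices and then the super-skew-symmetry $[\sigma(\mathrm{d}_{kl}),\mathrm{d}_{ij}]=[\mathrm{d}_{ij},\sigma(\mathrm{d}_{kl})]$ (both entries odd) yields (3). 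In $E(3,6)$ and $E(3,8)$, where $x_k\partial_j$ ($k\leq 3<j$) is not available in $\mathfrak{g}_0$, one routes instead through the pair of relations coming from $(\mathrm{d}_{i4},x_k\partial_i,\mathrm{d}_{i5})$ and $(\mathrm{d}_{i4},x_5\partial_4,\mathrm{d}_{k4})$. Two smaller corrections: (\ref{e3}) is the multiplicativity condition $\sigma[x,y]=[\sigma(x),\sigma(y)]$, not antisymmetry (that is (\ref{e1})); and your claim that the ``clean vanishing fails'' for $E(3,6)$, $E(3,8)$ is unsupported and in fact wrong --- the triple $x=z=\mathrm{d}_{i4}$, $y=x_5\partial_4$ forces $[\sigma(\mathrm{d}_{i4}),\mathrm{d}_{i5}]=0$ there as well; the lemma merely records the weaker skew relation it needs later.
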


\begin{proposition}\label{E5-p1}
Let $\mathfrak{g}$ be Lie superalgebra $E(5,10),$ $E(3,6)$ or $E(3,8).$ If $\sigma$ is a Hom-Lie superalgebra structure on $\mathfrak{g},$ then $$\sigma|_{\mathfrak{g}_{-1}}=\mathrm{id}|_{\mathfrak{g}_{-1}}.$$
\end{proposition}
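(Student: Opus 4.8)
The goal is to show $\sigma(\mathrm{d}_{ij})=\mathrm{d}_{ij}$ for every basis vector $\mathrm{d}_{ij}$ of $\mathfrak{g}_{-1}$, and I would run the same three-stage scheme that settled $E(4,4)$ in Proposition \ref{p4-1}: first locate $\sigma(\mathrm{d}_{ij})$ inside $\mathfrak{g}_{-1}$, then pin down its $\mathfrak{g}_{-1}$-coordinates, and finally normalize the resulting scalar to $1$. Since $\sigma$ is even and the gradation is consistent, $\sigma(\mathrm{d}_{ij})$ is a finite sum $\sum_{k}u_{k}$ of $\mathbb{Z}$-homogeneous odd pieces $u_{k}\in\mathfrak{g}_{k}$, and the role of Lemma \ref{E5-l1} is precisely that it evaluates $[\sigma(\mathrm{d}_{ij}),\mathrm{d}_{ab}]$ for every basis element $\mathrm{d}_{ab}$: for $E(5,10)$ it vanishes whenever $\{a,b\}$ meets $\{i,j\}$ (parts (1)--(2)), while for $E(3,6),E(3,8)$ part (1) instead records the antisymmetry $[\sigma(\mathrm{d}_{ij}),\mathrm{d}_{il}]=-[\sigma(\mathrm{d}_{il}),\mathrm{d}_{ij}]$; in all three cases the disjoint-index bracket equals $[\mathrm{d}_{ij},\sigma(\mathrm{d}_{kl})]$ (part (3)).

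First I would prove $\sigma(\mathrm{d}_{ij})\in\mathfrak{g}_{-1}$ by killing all nonnegative components through the transitivity (\ref{e4}). Because $[u_k,\mathrm{d}_{ab}]$ lands in the single degree $k-1$, the index-sharing identities decompose componentwise and give $[u_k,\mathrm{d}_{ab}]=0$ for all $k$ and all meeting pairs, whereas part (3) gives $[u_k^{ij},\mathrm{d}_{kl}]=[\mathrm{d}_{ij},u_k^{kl}]$ on disjoint pairs. Let $M\ge 1$ be the largest degree carrying a nonzero component of any $\sigma(\mathrm{d}_{pq})$. Transitivity forbids the nonzero element $u_M^{pq}\in\mathfrak{g}_{M}$ from centralizing $\mathfrak{g}_{-1}$, so some disjoint bracket $[u_M^{pq},\mathrm{d}_{kl}]=[\mathrm{d}_{pq},u_M^{kl}]$ is nonzero; feeding this symmetry relation back against the explicit table $[\mathrm{d}_{ij},\mathrm{d}_{kl}]=\varepsilon_{tijkl}\partial_t$ and the structure of $\mathfrak{g}_M$ as an irreducible $\mathfrak{g}_0$-module should force a contradiction and hence the vanishing of every positive component. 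For $E(3,8)$ there is one extra difficulty: $\sigma(\mathrm{d}_{ij})$ may also carry a piece in the bottom space $\mathfrak{g}_{-3}$, which is invisible to brackets with $\mathfrak{g}_{-1}$ (they land in $\mathfrak{g}_{-4}=0$) and so undetectable by transitivity; to remove it I would instead apply the Hom-Jacobi identity (\ref{e2}) to a triple $(\mathrm{d}_{ij},y,z)$ with $y\in\mathfrak{g}_{-1}$ and $z\in\mathfrak{g}_{1}$, so that a $\mathfrak{g}_{-3}$-contribution of the form $[u_{-3},[y,z]]$ becomes visible and can be cancelled. This maximal-degree step, together with the $\mathfrak{g}_{-3}$ cancellation, is what I expect to be the main obstacle, precisely because Lemma \ref{E5-l1}(3) only ties the top components to one another rather than annihilating them outright.

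Once $\sigma(\mathrm{d}_{ij})=\sum c_{ab}\mathrm{d}_{ab}\in\mathfrak{g}_{-1}$, the index-sharing relations of Lemma \ref{E5-l1}(1)--(2) together with the table $[\mathrm{d}_{ab},\mathrm{d}_{cd}]=\varepsilon\,\partial_t$ force every coefficient with $\{a,b\}\neq\{i,j\}$ to vanish, so $\sigma(\mathrm{d}_{ij})=c_{ij}\mathrm{d}_{ij}$ is a scalar multiple of $\mathrm{d}_{ij}$. Comparing $[\sigma(\mathrm{d}_{ij}),\mathrm{d}_{kl}]=c_{ij}\,\varepsilon\,\partial_t$ with $[\mathrm{d}_{ij},\sigma(\mathrm{d}_{kl})]=c_{kl}\,\varepsilon\,\partial_t$ via Lemma \ref{E5-l1}(3) gives $c_{ij}=c_{kl}$ whenever $\{i,j\}\cap\{k,l\}=\emptyset$; since the disjointness relation links all index pairs, every $c_{ij}$ equals one scalar $c$. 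Finally I would normalize $c=1$ exactly as at the close of Proposition \ref{p4-1}: writing $\mathrm{d}_{ij}=[h,\mathrm{d}_{ab}]$ for a suitable $h\in\mathfrak{g}_0$ and evaluating $[\sigma(h),\mathrm{d}_{ab}]$ once through the Hom-Jacobi identity (\ref{e2}) and once through the left inverse $\sigma^{-1}$ (which exists because $\sigma$ is a monomorphism) yields the two equalities $[\sigma(h),\mathrm{d}_{ab}]=c\,\mathrm{d}_{ij}$ and $[\sigma(h),\mathrm{d}_{ab}]=\mathrm{d}_{ij}$, whence $c=1$ and $\sigma|_{\mathfrak{g}_{-1}}=\mathrm{id}|_{\mathfrak{g}_{-1}}$.
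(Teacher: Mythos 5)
Your three-stage outline (locate $\sigma(\mathrm{d}_{ij})$ in $\mathfrak{g}_{-1}$, diagonalize, normalize the scalar) matches the paper's overall shape, and your second and third stages are essentially the paper's arguments (including the left-inverse trick for $\lambda=1$). But your first stage --- the only hard part --- has a genuine gap, in two respects. First, you assume $\sigma(\mathrm{d}_{ij})$ is a \emph{finite} sum of $\mathbb{Z}$-homogeneous pieces and build your contradiction on ``the largest degree $M$ carrying a nonzero component.'' These algebras are linearly compact: a general element is a formal series $\sum_{k\ge -h}u_k$ with possibly infinitely many nonzero components, so a top component need not exist (a bottom one does, but your contradiction is mounted at the top). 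Second, and more seriously, even for a fixed degree $k\ge 0$ the componentwise relations you extract do not suffice: transitivity (\ref{e4}) requires $[u_k^{ij},\mathfrak{g}_{-1}]=0$, and the disjoint-pair brackets $[u_k^{ij},\mathrm{d}_{kl}]=[\mathrm{d}_{ij},u_k^{kl}]$ are exactly what you cannot yet kill --- both sides could a priori be nonzero. Your proposal defers this to ``the structure of $\mathfrak{g}_M$ as an irreducible $\mathfrak{g}_0$-module should force a contradiction,'' which is precisely the content that needs proving; no argument is given. (Also, your claim that the index-sharing brackets vanish ``for all meeting pairs'' contradicts your own correct observation that for $E(3,6)$ and $E(3,8)$ Lemma \ref{E5-l1}(1) yields only antisymmetry, not vanishing.)

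The paper closes this gap not by an abstract grading argument but by the concrete realization of the odd part: since the gradations are consistent, $\sigma(\mathrm{d}_{ij})$ is an odd element and hence a \emph{closed} $2$-form $\sum_{m<n}f_{mn}\mathrm{d}_{mn}$ with formal power series coefficients, so all $\mathbb{Z}$-degrees are treated at once and no top-degree induction is needed. Lemma \ref{E5-l1}(1)(2) annihilates $f_{mn}$ for $\{m,n\}\neq\{i,j\}$; the closedness condition $\mathrm{d}\bigl(f_{ij}\mathrm{d}_{ij}\bigr)=0$ forces $f_{ij}\in\mathbb{F}[[x_i,x_j]]$; and Lemma \ref{E5-l1}(3) equates $f_{ij}$ with $f_{kl}\in\mathbb{F}[[x_k,x_l]]$, two series in disjoint sets of variables, forcing both to be the same constant $\lambda$. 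For $E(3,6)$ and $E(3,8)$, where (1) gives only antisymmetry, the paper substitutes Hom-Jacobi identities against elements $x_i\partial_k\in\mathfrak{g}_0$ to derive $f_{kj}=-x_i\partial_k(f_{ij})$ and $\partial_k(f_{kj})=\partial_k(f_{qj})=0$. Thus containment in $\mathfrak{g}_{-1}$ and diagonality are proved simultaneously, and the closedness of the forms --- entirely absent from your proposal --- is the ingredient that makes the vanishing of all non-$(-1)$-degree components provable rather than merely plausible.
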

\begin{proof}
First, let us prove $\sigma|_{\mathfrak{g}_{-1}}=\lambda\mathrm{id}|_{\mathfrak{g}_{-1}},$£¬where $\lambda\in \mathbb{F}.$

$\mathbf{Case\ 1:}$  $\;\mathfrak{g}=E(5,10).$
Noting  that the gradation over $E(5,10)$ is consistent  and $|\sigma|=\bar{0},$ $|\mathrm{d}_{ij}|=\bar{1},$ one may suppose
$$\sigma(\mathrm{d}_{ij})=\sum_{1\leq m<n\leq 5}f_{mn}\mathrm{d}_{mn},\quad\mbox{where}\; f_{mn}\in\mathbb{F}[[x_1,\ldots,x_5]].$$
By Lemma \ref{E5-l1} $(\mathrm{1}),$ one has
\begin{eqnarray*}
[\sigma(\mathrm{d}_{ij}),\mathrm{d}_{il}]
&=&\left[\sum_{1\leq m<n\leq 5}f_{mn}\mathrm{d}_{mn},\mathrm{d}_{il}\right]
=\left[\sum_{m,n\neq i,l}f_{mn}\mathrm{d}_{mn},\mathrm{d}_{il}\right]\\
&=&\sum_{m,n\neq i,l}[f_{mn}\mathrm{d}_{mn},\mathrm{d}_{il}]
=\sum_{m,n\neq i,l}\varepsilon_{qmnil}f_{mn}\partial_q=0.
\end{eqnarray*}
 The arbitrariness of $l$ shows
$$f_{mn}=0,\quad \mbox{where}\; m,n\neq i.$$
Similarly, by Lemma \ref{E5-l1} $(\mathrm{2}),$ one gets
$$f_{mn}=0,\quad \mbox{where}\; m,n\neq j.$$
Thus,$$\sigma(\mathrm{d}_{ij})=f_{ij}\mathrm{d}_{ij}.$$
In view of $\sigma(\mathrm{d}_{ij})\in E(5,10)_{\bar{1}}\simeq\mathrm{d}\Omega^1(5) $ and (\ref{E5-x1}), one has
$$0=\mathrm{d}(\sigma(\mathrm{d}_{ij}))=\mathrm{d}(f_{ij}\mathrm{d}_{ij})=\sum_{m=1}^5\partial_m(f_{ij})\mathrm{d}x_m\wedge\mathrm{d}x_{i}\wedge\mathrm{d}x_{j}.$$
Therefore,  $\partial_m(f_{ij})=0$ for $m\neq i,j,$ that is $f_{ij}\in \mathbb{F}[[x_i,x_j]].$
So one may suppose $\sigma(\mathrm{d}_{kl})=f_{kl}\mathrm{d}_{kl},$ where $f_{kl}\in \mathbb{F}[[x_k,x_l]].$
Then
$$[\sigma(\mathrm{d}_{ij}),\mathrm{d}_{kl}]=[f_{ij}\mathrm{d}_{ij},\mathrm{d}_{kl}]=\varepsilon_{qijkl}f_{ij}\partial_q,$$
$$[\sigma(\mathrm{d}_{kl}),\mathrm{d}_{ij}]=[f_{kl}\mathrm{d}_{kl},\mathrm{d}_{ij}]=\varepsilon_{qklij}f_{kl}\partial_q.$$
For distinct $i,j,k,l,$  Lemma \ref{E5-l1} $(\mathrm{3})$ implies
$f_{ij}=f_{kl}\in\mathbb{F}.$ Noting that there exists $t$ such that $t\neq i,j,k,l,$  one may obtain $f_{kt}=f_{ij}$ in the same way. Then $f_{ij}=\lambda \in \mathbb{F}$ for any $i,j=1,2,3,4,5$ and
$i\neq j.$
Thus, we proved $\sigma(\mathrm{d}_{ij})=\lambda\mathrm{d}_{ij}.$

$\mathbf{Case\ 2:}$ $\;\mathfrak{g}=E(3,6)$ or $E(3,8).$
 Noting that the gradation over $\mathfrak{g}$ is consistent and $|\sigma|=\bar{0},$ $|\mathrm{d}_{ij}|=\bar{1},$ one may suppose
$$\sigma(\mathrm{d}_{ij})=\sum_{m,n}f_{mn}\mathrm{d}_{mn},\quad\mbox{where}\; m=1,2,3,n=4,5, \;f_{mn}\in\mathbb{F}[[x_1,\ldots,x_5]].$$
By Lemma \ref{E5-l1} $(\mathrm{2}),$ one can deduce
\begin{eqnarray*}
\left[\sigma(\mathrm{d}_{ij}),\mathrm{d}_{kj}\right]=\left[\sum_{m,n}f_{mn}\mathrm{d}_{mn},\mathrm{d}_{kj}\right]=\left[\sum_{m\neq k ,n\neq j}f_{mn}\mathrm{d}_{mn},\mathrm{d}_{kj}\right]=0.
\end{eqnarray*}
It implies $f_{mt}=0,$ $m\neq k,$ $t\neq j.$
By the  arbitrariness of $k,$  one may suppose
$$\sigma(\mathrm{d}_{ij})=\sum_{m=1}^3f_{mj}\mathrm{d}_{mj},\;
\sigma(\mathrm{d}_{il})=\sum_{m=1}^3f_{ml}\mathrm{d}_{ml},\quad f_{mj}, f_{ml}\in\mathbb{F}[[x_1,\ldots,x_5].$$
Noting that $[\sigma(\mathrm{d}_{il}),\mathrm{d}_{kj}]=[\sigma(\mathrm{d}_{il}),[x_k\partial_i,\mathrm{d}_{ij}]]=-[\sigma(\mathrm{d}_{ij}),\mathrm{d}_{kl}],$ the simple computations show
$f_{mj}=f_{ml}$ for $ m=1,2,3.$
In view of $\sigma(\mathrm{d}_{ij})\in \mathfrak{g}_{\bar{1}}\subseteq\mathrm{d}\Omega^1(5)$ and (\ref{E5-x1}),
one could further deduce
\begin{eqnarray}\label{E3-0}
f_{mj}=f_{ml}\in\mathbb{F}[[x_1,x_2,x_3]].
\end{eqnarray}
For distinct $i,k,q=1,2,3,$
\begin{eqnarray*}
0&=&[\sigma(\mathrm{d}_{ij}),[x_i\partial_q,x_q\partial_k]]=[\sigma(\mathrm{d}_{ij}),x_i\partial_k]=\left[\sum_{m=1}^3f_{mj}\mathrm{d}_{mj},x_i\partial_k\right]\\
&=&-\sum_{m=1}^3\big(x_i\partial_k(f_{mj})\mathrm{d}_{mj}+f_{mj}x_i\partial_k(\mathrm{d}_{mj})\big)\\
&=&-(x_i\partial_k(f_{ij})+f_{kj})\mathrm{d}_{ij}-x_i\partial_k(f_{kj})\mathrm{d}_{kj}-x_i\partial_k(f_{qj})\mathrm{d}_{qj}.
\end{eqnarray*}
Therefore,
\begin{eqnarray}\label{E3-2}
f_{kj}=-x_i\partial_k(f_{ij}).
\end{eqnarray}
\begin{eqnarray}\label{E3-1}
\partial_{k}(f_{kj})=\partial_{k}(f_{qj})=0,
\end{eqnarray}
From (\ref{E3-0})-(\ref{E3-1}) and the arbitrariness of $k,$ one has
$f_{mj}=0,$ $m\neq i.$ Thus, one may suppose
$$\sigma(\mathrm{d}_{ij})=f_i\mathrm{d}_{ij},\; \sigma(\mathrm{d}_{kl})=f_k\mathrm{d}_{kl},\quad\mbox{where}\; f_{i}\in\mathbb{F}[[x_i]],\;f_k\in\mathbb{F}[[x_k]].$$
Then, for distinct $i,j,k,l,q,$
$$[\sigma(\mathrm{d}_{ij}),\mathrm{d}_{kl}]=[f_i\mathrm{d}_{ij},\mathrm{d}_{kl}]=\varepsilon_{qijkl}f_i\partial_q,$$
$$[\mathrm{d}_{ij},\sigma(\mathrm{d}_{kl})]=[\mathrm{d}_{ij},f_k\mathrm{d}_{kl},]=\varepsilon_{qijkl}f_k\partial_q.$$
By Lemma \ref{E5-l1} (3),  $f_i=f_k\in \mathbb{F}.$ Thus, we have proved $\sigma(\mathrm{d}_{ij})=\lambda\mathrm{d}_{ij}$ for some $\lambda\in\mathbb{F}.$

Now, let us prove $\lambda=1$ for $\sigma|_{\mathfrak{g}_{-1}}=\lambda\mathrm{id}|_{\mathfrak{g}_{-1}}.$
Suppose $x=x_i\partial_l,$ $y=x_l\partial_q,$ $z=\mathrm{d}_{qj}.$   Then
\begin{eqnarray*}
[\sigma(x),\mathrm{d}_{lj}]=[\sigma(x),[y,z]]
=-[\sigma(y),[z,x]]-[\sigma(z),[x,y]]
=\lambda\mathrm{d}_{ij}.
\end{eqnarray*}
Noting that $\sigma$ is a monomorphism, one may suppose $\sigma^{-1}$ is a left linear inverse of $\sigma.$ Then the equation above implies
$$\sigma^{-1}[\sigma(x),\mathrm{d}_{lj}]=[x,\lambda^{-1}\mathrm{d}_{lj}]=\lambda^{-1}\mathrm{d}_{ij}=\mathrm{d}_{ij}.$$
Thus, $\lambda=1,$ that is, $\sigma(\mathrm{d}_{ij})=\mathrm{d}_{ij}.$ The proof is completed.
\end{proof}
\begin{proposition}\label{E5-p2}
Let $\mathfrak{g}$ be Lie superalgebra $E(5,10),$ $E(3,6)$ or $E(3,8).$ If $\sigma$ is a   Hom-Lie superalgebra structure on $\mathfrak{g},$ then $$\sigma|_{\mathfrak{g}_{0}}=\mathrm{id}|_{\mathfrak{g}_{0}}.$$
\end{proposition}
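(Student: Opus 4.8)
The plan is to reduce, exactly as in Proposition \ref{p4-2}, to computing a correction living in the lowest even graded component, and then to kill that correction. By Proposition \ref{E5-p1} we have $\sigma|_{\mathfrak{g}_{-1}}=\mathrm{id}$, and since $\mathfrak{g}_{-2}=\mathfrak{g}_{-1}^2$, relation (\ref{e3}) gives $\sigma|_{\mathfrak{g}_{-2}}=\mathrm{id}$ as well. Because the gradation on each of $E(5,10),E(3,6),E(3,8)$ is consistent, $\mathfrak{g}_0\subseteq\mathfrak{g}_{\bar 0}$ and $\sigma$ is even, so Lemma \ref{00} forces $\sigma(x)-x$ into the even part of $\oplus_{k\ge 1}\mathfrak{g}_{-k}$, which is exactly $\mathfrak{g}_{-2}$. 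I would therefore write $\sigma(x)=x+w_x$ with $w_x\in\mathfrak{g}_{-2}$, the goal being $w_x=0$. Feeding $x\in\mathfrak{g}_0$ and an arbitrary $y\in\mathfrak{g}_{-1}$ into (\ref{e3}) (using $\sigma(y)=y$ and $[x,y]\in\mathfrak{g}_{-1}$) yields the basic relation $[w_x,\mathfrak{g}_{-1}]=0$.

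For $E(3,8)$ this already finishes, since its grading has depth $3$ with $\mathfrak{g}_{-3}=[\mathfrak{g}_{-1},\mathfrak{g}_{-2}]\neq 0$. If some $w_x\neq 0$, then, $\mathfrak{g}_{-2}$ being a nontrivial irreducible $\mathfrak{g}_0$-module, $[\mathfrak{g}_0,w_x]=\mathfrak{g}_{-2}$, and the Jacobi identity propagates $[w_x,\mathfrak{g}_{-1}]=0$ to $[[\mathfrak{g}_0,w_x],\mathfrak{g}_{-1}]=[\mathfrak{g}_{-2},\mathfrak{g}_{-1}]=0$, contradicting $\mathfrak{g}_{-3}\neq 0$. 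Hence $w_x=0$ for every $x$, i.e. $\sigma|_{\mathfrak{g}_0}=\mathrm{id}$.

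The genuinely delicate case is $E(5,10)$ and $E(3,6)$, where the depth is only $2$: now $[\mathfrak{g}_{-2},\mathfrak{g}_{-1}]\subseteq\mathfrak{g}_{-3}=0$, so the relation $[w_x,\mathfrak{g}_{-1}]=0$ is vacuous and $\sigma|_{\mathfrak{g}_{\le 0}}$ is a priori indistinguishable from the inner automorphism $\exp(\mathrm{ad}\,w_0)$ attached to any $w_0\in\mathfrak{g}_{-2}$. To detect $w_x$ one must use a positive component. I would take a root vector $x=x_a\partial_b$, writing $w_x=\sum_m\mu_m\partial_m$, pick $z\in\mathfrak{g}_{-1}$ with $[z,x]=0$ (for instance $z=\mathrm{d}_{ef}$ with $e,f\neq b$) and $y\in\mathfrak{g}_1$ with $[y,z]$ equal to a prescribed root vector $x_c\partial_d$ of $\mathfrak{g}_0$; such $y$ exists because the bracket $\mathfrak{g}_1\times\mathfrak{g}_{-1}\to\mathfrak{g}_0$ realizes every root vector (e.g. $[x_c\mathrm{d}_{cq},\mathrm{d}_{ef}]=\pm x_c\partial_d$ for a suitable choice of $q,e,f$, and analogously for $E(3,6)$). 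In the Hom-Jacobi identity (\ref{e2}) for $(x,y,z)$ the term $[\sigma(y),[z,x]]$ dies because $[z,x]=0$, while $[\sigma(z),[x,y]]=[z,[x,y]]$ lies in $\mathfrak{g}_0$; comparing $\mathfrak{g}_{-2}$-components therefore gives $[w_x,[y,z]]=[w_x,x_c\partial_d]=\mu_c\partial_d=0$, so $\mu_c=0$. Letting $x_c\partial_d$ run over enough root vectors kills every coordinate of $w_x$.

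This establishes $w_x=0$ for all root vectors, hence on $[\mathfrak{g}_0,\mathfrak{g}_0]$; the only remaining direction is the central element $I$ of $\mathfrak{gl}(3)$ in $E(3,6)$, for which no nonzero $z\in\mathfrak{g}_{-1}$ commutes with $x$. I would dispatch it by observing that (\ref{e3}) makes $x\mapsto w_x$ a derivation $\mathfrak{g}_0\to\mathfrak{g}_{-2}$, so $w_I$ is annihilated by $[\mathfrak{g}_0,\mathfrak{g}_0]$ and hence is a $\mathfrak{g}_0$-invariant of $\mathfrak{g}_{-2}$; since $\mathfrak{g}_{-2}\cong\mathbb{F}^{3^*}$ has no nonzero invariants, $w_I=0$. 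The main obstacle is precisely this depth-$2$ phenomenon: the correction is invisible to $\mathfrak{g}_{-1}$, so one is forced to involve $\mathfrak{g}_1$, and the technical heart is arranging the auxiliary bracket $[z,x]=0$ so that the unknown values of $\sigma$ on $\mathfrak{g}_1$ drop out of the Hom-Jacobi identity while $[y,z]$ still separates all coordinates of $w_x$.
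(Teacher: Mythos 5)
Your proof is correct, but it takes a genuinely different route from the paper's. The paper never leaves the non-positive part: after localizing the correction $w_x=\sigma(x)-x$ in $\mathfrak{g}_{-2}$ (exactly as you do, via Proposition \ref{E5-p1}, Lemma \ref{00} and evenness of $\sigma$), it kills the coordinates of $w_x$ by feeding triples $(x,y,z)$ lying entirely in $\mathfrak{g}_0$ into the Hom-Jacobi identity (\ref{e2}), chosen so that $[x,y]=[z,x]=0$ while $[y,z]$ is a root vector acting nontrivially on $\mathfrak{g}_{-2}$ (e.g.\ $0=[\sigma(x_i\partial_j),[x_q\partial_k,x_k\partial_l]]=\lambda_q\partial_l$), and it disposes of Cartan elements by multiplicativity (\ref{e3}). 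You instead use: for $E(3,8)$, a purely structural argument from $[w_x,\mathfrak{g}_{-1}]=0$, irreducibility of $\mathfrak{g}_{-2}$ and $[\mathfrak{g}_{-1},\mathfrak{g}_{-2}]=\mathfrak{g}_{-3}\neq0$ (cleanest phrasing: $\{v\in\mathfrak{g}_{-2}\mid[v,\mathfrak{g}_{-1}]=0\}$ is a $\mathfrak{g}_0$-submodule, proper, hence zero --- this repairs the slight imprecision in your claim $[\mathfrak{g}_0,w_x]=\mathfrak{g}_{-2}$); for $E(5,10)$ and $E(3,6)$, mixed triples in $\mathfrak{g}_0\times\mathfrak{g}_1\times\mathfrak{g}_{-1}$; and for the center of $\mathfrak{gl}(3)$, the observation that $x\mapsto w_x$ is a $\mathfrak{g}_{-2}$-valued derivation together with the absence of $\mathfrak{sl}(3)$-invariants in $\mathbb{F}^{3^*}$ --- an elegant replacement for the paper's Cartan computation. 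What your route costs is explicit knowledge of $\mathfrak{g}_1$, which the paper never needs or describes: you must exhibit $y\in\mathfrak{g}_1$ (your $x_c\mathrm{d}_{cq}$) and, for $E(3,6)$, check that the chosen element actually lies in the subalgebra $E(3,6)_1\subset E(5,10)_1$ (it does: $x_c\mathrm{d}_{cj}=\tfrac12\mathrm{d}(x_c^2\mathrm{d}x_j)$, $j=4,5$), and then verify case by case that pairs $(y,z)$ with $[z,x]=0$ realize enough root vectors $[y,z]$ to kill every coordinate of $w_x$; I checked this does go through for all positions of $x=x_a\partial_b$ and all coordinates.

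One assertion you make is false, though it is motivational rather than load-bearing: that in the depth-$2$ cases ``one must use a positive component'' because $\sigma|_{\mathfrak{g}_{\leq0}}$ is indistinguishable from $\exp(\mathrm{ad}\,w_0)$. The map $\exp(\mathrm{ad}\,w_0)$ is an automorphism of the subalgebra $\mathfrak{g}_{\leq0}$, so it satisfies multiplicativity (\ref{e3}) there; but multiplicativity is not the only constraint available inside $\mathfrak{g}_{\leq0}$. The Hom-Jacobi identity (\ref{e2}) for triples inside $\mathfrak{g}_0$ is also such a constraint, it is not implied by $\sigma$ being an automorphism, and it fails for $\exp(\mathrm{ad}\,w_0)$ when $w_0\neq0$ --- this is precisely what the paper exploits (its two evaluations of $[\sigma(x_i\partial_j),[x_j\partial_i,x_i\partial_k]]$ force $\lambda_j=0$). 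So involving $\mathfrak{g}_1$ is a legitimate convenience of your argument, not a necessity.
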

\begin{proof}
$\mathbf{Case\ 1:}$ $\;\mathfrak{g}=E(5,10).\;$
Note that $E(5,10)_0$ is generated(Lie product) by $\{x_i\partial_j|i,j=1,\ldots, 5,\;i\neq j\},$  it is sufficient to prove $\sigma(x)=x$ only for $x=x_i\partial_j.$
 By Proposition \ref{E5-p1}, Lemma \ref{00} and $|\sigma|=\bar{0},$  we have $\sigma(x_i\partial_j)-x_i\partial_j\in \mathfrak{g}_{-2}.$  Then one may suppose
$$\sigma(x_i\partial_j)=x_i\partial_j+\sum_{m=1}^5\lambda_m\partial_m,\quad \lambda_m\in \mathbb{F}.$$
For distinct $i,j,q,k,l,$
$$0=[\sigma(x_i\partial_j),[x_q\partial_k,x_k\partial_l]]=\left[x+\sum_{m=1}^5 \lambda_m\partial_m,x_q\partial_l\right]=\lambda_q\partial_l.$$
Hence, $\lambda_q=0.$ The arbitrariness of $q\neq i,j$ shows
$$\sigma(x_i\partial_j)=x_i\partial_j+\lambda_i\partial_i+\lambda_j\partial_j.$$
Similarly,
$$0=[\sigma(x_i\partial_j),[x_i\partial_l,x_l\partial_k]]=[x+\lambda_i\partial_i+\lambda_j\partial_j,x_i\partial_k]=\lambda_i\partial_k$$
implies $\lambda_i=0.$
Therefore,
\begin{eqnarray}\label{E35-5}
\sigma(x_i\partial_j)=x_i\partial_j+\lambda_j\partial_j.
\end{eqnarray}
On one hand,
\begin{eqnarray}\label{E35-6}
[\sigma(x_i\partial_j),[x_j\partial_i,x_i\partial_k]]=[x_i\partial_j+\lambda_j\partial_j,x_j\partial_k]=x_i\partial_k+\lambda_j\partial_k.
\end{eqnarray}
On the other hand, using (\ref{e2}) and (\ref{E35-5}), one can derive
\begin{eqnarray}\label{E35-7}
[\sigma(x_i\partial_j),[x_j\partial_i,x_i\partial_k]]=-[\sigma(x_i\partial_k),x_i\partial_i-x_j\partial_j]=x_i\partial_k.
\end{eqnarray}
Comparing (\ref{E35-6}) with (\ref{E35-7}), one gets
$\lambda_j=0.$ By (\ref{E35-5}), we have proved $\sigma(x_i\partial_j)=x_i\partial_j.$

$\mathbf{Case\ 2:}$ $\;\mathfrak{g}=E(3,6)$ or $E(3,8).\;$ Firstly, let us show $\sigma(x_i\partial_j)=x_i\partial_j$ for any $x_i\partial_j\in\mathfrak{g}_0.$ Similar to the above, one may suppose
$$\sigma(x_i\partial_j)=x_i\partial_j+\sum_{m=1}^3\lambda_m\partial_m,\quad \lambda_m\in \mathbb{F}.$$

$\mathbf{Case\ 2.1:}$ $\;$  Let $i,j=1,2,3.$   For distinct $i,j,k,$ by (\ref{e2}),
$$0=[\sigma(x_i\partial_j),[ x_i\partial_k,x_k\partial_j]]=[\sigma(x_i\partial_j), x_i\partial_j]=
\left[x_i\partial_j+\sum_{m=1}^3\lambda_m\partial_m,x_i\partial_j\right]=\lambda_i\partial_j.$$
Consequently, $\lambda_i=0.$ One may suppose $\sigma(x_k\partial_j)=x_k\partial_j+\sum_{m\neq k}^3\mu_m\partial_m,$
On one hand,
\begin{eqnarray*}\label{E3-19}
[\sigma(x_i\partial_j), x_k\partial_i]=\left[x_i\partial_j+\sum_{m\neq i}\lambda_m\partial_m, x_k\partial_i\right]=-x_k\partial_j+\lambda_k\partial_i.
\end{eqnarray*}
On the other hand, by (\ref{e2}), one deduce
\begin{eqnarray*}
[\sigma(x_i\partial_j), x_k\partial_i]&=&[\sigma(x_i\partial_j),[ x_k\partial_j, x_j\partial_i]]\\\nonumber
&=&-[\sigma(x_k\partial_j),[ x_j\partial_i, x_i\partial_j]]-[\sigma(x_j\partial_i),[ x_i\partial_j, x_k\partial_j]]\\\nonumber
&=&-x_k\partial_j-\mu_j\partial_j+\mu_i\partial_i.\label{E3-20}
\end{eqnarray*}
Comparing the two equations above, one gets $\mu_j=0$ and $\lambda_k=\mu_i.$ In view of the arbitrariness of  $i,j,k,$ one may assume
$$\sigma(x_i\partial_j)=x_i\partial_j+\lambda\partial_k,\;\sigma(x_k\partial_j)=x_k\partial_j+\lambda\partial_i,\quad i,j,k \; \mbox{are distinct}.$$
Then
\begin{eqnarray*}
\sigma(x_i\partial_j)=\sigma[x_i\partial_k,x_k\partial_j]=[\sigma(x_i\partial_k),\sigma(x_k\partial_j)]
=x_i\partial_j-\lambda\partial_k.
\end{eqnarray*}
The two equations above imply  $\lambda=0,$ and then $\sigma(x_{i}\partial_{j})=x_i\partial_j,$ $i,j=1,2,3.$

$\mathbf{Case\ 2.2:}$ $\;$ Let $i,j=4,5.$  For distinct $k,l,q=1,2,3,$
$$0=[\sigma(x_i\partial_j),[x_k\partial_q,x_q\partial_l]]=[\sigma(x_i\partial_j),x_k\partial_l]=
\left[x_i\partial_j+\sum_{m=1}^3\lambda_m\partial_m,x_k\partial_l\right]=
\lambda_k\partial_l.$$
Therefor, $\lambda_k=0.$ The arbitrariness of $k$ implies $\sigma(x_i\partial_j)=x_i\partial_j.$

Next, we will prove $\sigma(h_m)=h_m$  for $m=1,2,3,4.$ Similarly,  suppose
$\sigma(h_m)=h_m+\sum_{k=1}^3\lambda_k\partial_k.$ Noting that $h_m$ is an element of the basis of  $\mathfrak{g}_0$'s Cartan subalgebra, one may assume $[h_m,x_i\partial_j]=\gamma x_i\partial_j,$ $\gamma\in\mathbb{F},$ $i,j=1,2,3.$
By the result obtained in Case 2.1, one can deduce
\begin{eqnarray*}
\sigma[h_m,x_i\partial_j]=\gamma\sigma(x_i\partial_j)=\gamma x_i\partial_j.
\end{eqnarray*}
On the other hand,
 \begin{eqnarray*}
\sigma[h_m,x_i\partial_j]=[\sigma(h_m),\sigma(x_i\partial_j)]=\left[h_m+\sum_{k=1}^3\lambda_k\partial_k,x_i\partial_j\right]
=\gamma x_i\partial_j+\lambda_i\partial_j.
\end{eqnarray*}
The two equations above imply that $\lambda_i=0$ for $i=1,2,3.$ Thus, we proved $\sigma(h_m)=h_m$  for $m=1,2,3,4.$

Summing up, we have proved $\sigma(x)=x$ for any $x\in \mathfrak{g}_0,$ that is, $\sigma|_{\mathfrak{g}_{0}}=\mathrm{id}|_{\mathfrak{g}_{0}}.$
\end{proof}

\subsection{ E(1,6)}
The exceptional simple Lie superalgebra $E(1,6)$ is a subalgebra of  $K(1,6).$ The principal gradation over $K(1,6)$(see \cite{SK}) induces an  irreducible consistent $\mathbb{Z}$-gradation over $E(1,6).$ Moreover,  $E(1,6)$ has the same non-positive $\mathbb{Z}$-graded components as $K(1,6)$:
$E(1,6)_j=K(1,6)_j,$ $j\leq 0.$ Let $x$ be even and $\xi_i,$ $i=1,\ldots,6,$ be odd indeterminates. Then
\begin{eqnarray*}
&&E(1,6)_0 \simeq \mathfrak{cspo}(6)=\langle x, \xi_i\xi_j \;|\;i,j=1,\ldots, 6, i\neq j\rangle,\\
&&E(1,6)_{-1}\simeq \mathbb{F}^6=\langle \xi _{i}\;|\;i=1,\ldots, 6\rangle,\\
&&E(1,6)_{-2}\simeq  \mathbb{F}.
\end{eqnarray*}
For $f,g\in E(1,6),$ the bracket product is defined as
\begin{eqnarray*}
[f,g]&=&\left(2f-\sum_{i=1}^6\xi_i\partial_i(f)\right)\partial_x(g)-(-1)^{|f||g|}\left(2g-\sum_{i=1}^6\xi_i\partial_i(g)\right)\partial_x(f)\\
&&+(-1)^{|f|}\sum_{i=1}^6\partial_i(f)\partial_i(g).
\end{eqnarray*}
\begin{proposition}\label{E6-p1}
If $\sigma$ is a   Hom-Lie superalgebra structure on $E(1,6),$ then $$\sigma|_{E(1,6)_{-1}}=\mathrm{id}|_{E(1,6)_{-1.}}$$
\end{proposition}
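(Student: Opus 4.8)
The plan is to reproduce, for $E(1,6)$, the two-stage scheme already used in Propositions~\ref{p4-1} and~\ref{E5-p1}: first show that $\sigma$ preserves $E(1,6)_{-1}$ and acts there as a single scalar $\lambda\,\mathrm{id}$, and then force $\lambda=1$ from the fact that $\sigma$ is a monomorphism. The only structure constants I need are $[\xi_i,\xi_j]=-\delta_{ij}\in E(1,6)_{-2}$ and the $\mathfrak{cspo}(6)$-action $[\xi_a\xi_b,\xi_k]=\delta_{ka}\xi_b-\delta_{kb}\xi_a$, together with the observation that $\xi_k=[\xi_m\xi_k,\xi_m]$ for any $m\neq k$, which realizes each basis vector of $E(1,6)_{-1}$ as a bracket of a degree-$0$ element with a degree-$(-1)$ element.

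First I would substitute $\xi_k=[\xi_m\xi_k,\xi_m]$ into the Hom-Jacobi identity~(\ref{e2}) with $x=\xi_i$, $y=\xi_m\xi_k$, $z=\xi_m$ and $m$ chosen distinct from $i$ and $k$. For such $m$ the inner brackets $[\xi_m,\xi_i]$ and $[\xi_i,\xi_m\xi_k]$ vanish, so the two terms containing the still-unknown values $\sigma(\xi_m\xi_k)$ and $\sigma(\xi_m)$ drop out and one is left with $[\sigma(\xi_i),\xi_k]=0$ for all $k\neq i$. Running the same identity with $y=\xi_m\xi_i$ instead gives $[\sigma(\xi_i),\xi_i]=[\sigma(\xi_m),\xi_m]$, so the diagonal brackets all coincide with one element $c$ that does not depend on $i$. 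Thus $[\sigma(\xi_i),\xi_k]=\delta_{ik}\,c$ for every $i,k$.

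Next I would upgrade this to $\sigma(\xi_i)\in E(1,6)_{-1}$. Because $\sigma$ is even and the gradation is consistent, $\sigma(\xi_i)$ splits into odd-degree homogeneous pieces $\sum_{j\geq-1}w_j$ with $w_j\in E(1,6)_j$, and $[w_j,\xi_k]\in E(1,6)_{j-1}$. Comparing the degree-$(j-1)$ components of $[\sigma(\xi_i),\xi_k]=\delta_{ik}c$ gives, for each $j\geq1$, that $[w_j,\xi_k]=0$ for $k\neq i$ while $[w_j,\xi_i]$ equals the degree-$(j-1)$ part $c_{j-1}$ of $c$, which is again independent of $i$. The graded Jacobi identity in $E(1,6)$ then yields $[c_{j-1},\xi_l]=0$ for every $l\neq i$; letting $i$ vary shows $c_{j-1}$ kills all of $E(1,6)_{-1}$, so transitivity~(\ref{e4}) forces $c_{j-1}=0$, whence $[w_j,E(1,6)_{-1}]=0$ and $w_j=0$ for all $j\geq1$. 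Therefore $\sigma(\xi_i)\in E(1,6)_{-1}$; writing $\sigma(\xi_i)=\sum_m a_{im}\xi_m$ and re-reading $[\sigma(\xi_i),\xi_k]=\delta_{ik}c$ collapses it to $\sigma(\xi_i)=\lambda\xi_i$ with $\lambda$ independent of $i$. To finish, I would pin down $\lambda=1$ with the left-inverse trick of Proposition~\ref{E5-p1}: applying~(\ref{e2}) to $w=\xi_a\xi_b$, $u=\xi_b\xi_c$, $z=\xi_c$ with distinct $a,b,c$, the vanishing of $[\xi_c,\xi_a\xi_b]$ removes the unknown $\sigma(u)$ term and, using $\sigma(\xi_c)=\lambda\xi_c$, produces $[\sigma(w),\xi_b]=-\lambda\xi_a$; applying $\sigma^{-1}$ and~(\ref{e3}) then equates $-\xi_a$ with $-\lambda^{-1}\xi_a$, so $\lambda=1$.

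I expect the genuine obstacle to be exactly the place where $E(1,6)$ departs from $E(4,4)$: here $E(1,6)_{-2}\simeq\mathbb{F}$ is nonzero, so $[\sigma(\xi_i),\xi_i]$ need not vanish and one cannot simply invoke $[\sigma(\xi_i),E(1,6)_{-1}]=0$ to pass directly through transitivity. The degree-separation argument above, and in particular the use of the $i$-independence of the diagonal value $c$ to show that its positive-degree parts annihilate all of $E(1,6)_{-1}$, is the step that must be executed carefully; everything else is bracket bookkeeping of the same flavour as in the preceding propositions.
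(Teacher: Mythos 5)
Your proposal is correct, and it follows the paper's overall two-stage architecture for Proposition \ref{E6-p1} (first force $\sigma|_{E(1,6)_{-1}}=\lambda\,\mathrm{id}$, then pin down $\lambda=1$ by the left-inverse trick). But at the decisive step you proceed differently from the paper, and your version is in fact the more careful one. The paper substitutes $\xi_k=[\xi_j\xi_k,\xi_j]$ into (\ref{e2}) exactly as you do, obtains $[\sigma(\xi_i),\xi_k]=\delta_{k,i}[\sigma(\xi_j),\xi_j]$, and then asserts that $[\sigma(\xi_i),\xi_k]=0$ for \emph{all} $i,k$, passing to $\sigma(E(1,6)_{-1})\subseteq E(1,6)_{-1}$ by transitivity in one line. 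That assertion is false for $k=i$: the identity only shows the diagonal brackets share a common value $c$, and $c$ cannot vanish --- the paper itself later computes $[\sigma(\xi_i),\xi_i]=-\lambda_i$, and already for $\sigma=\mathrm{id}$ one has $[\xi_i,\xi_i]=-1\in E(1,6)_{-2}$ (this is exactly the point you flag: unlike in $E(4,4)$, here $\mathfrak{g}_{-2}\simeq\mathbb{F}$ receives a nonzero pairing from $\mathfrak{g}_{-1}$). Your degree-separation argument --- split $\sigma(\xi_i)$ into odd-degree components $w_j$, use the ordinary super Jacobi identity together with the $i$-independence of $c$ to show each part $c_{j-1}$ of degree $\geq 0$ annihilates all of $E(1,6)_{-1}$, then kill $c_{j-1}$ and the $w_j$, $j\geq 1$, by transitivity (\ref{e4}) --- is precisely what is needed to make this containment rigorous, so your proof repairs a genuine flaw in the published one. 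Your middle step is also more economical: once $\sigma(\xi_i)\in E(1,6)_{-1}$, you read $\sigma(\xi_i)=\lambda\xi_i$ (with $\lambda$ independent of $i$) directly off $[\sigma(\xi_i),\xi_k]=\delta_{ik}c$ and the nondegeneracy of the pairing $E(1,6)_{-1}\times E(1,6)_{-1}\to E(1,6)_{-2}$, whereas the paper needs two further applications of (\ref{e2}) (via $[\xi_j\xi_t,\xi_s\xi_t]=\xi_j\xi_s$ and $[\xi_j\xi_i,\xi_j]=\xi_i$) to kill the off-diagonal coefficients and equalize the diagonal ones. Your closing computation $[\sigma(\xi_a\xi_b),\xi_b]=-\lambda\xi_a$ plus the left linear inverse is the same device as the paper's (\ref{6-1})--(\ref{6-2}), up to index bookkeeping.
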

\begin{proof}
For $i,j,k=1,\ldots,6$ and $j\neq i,k,$ by (\ref{e2}),
\begin{eqnarray*}
[\sigma(\xi_i),\xi_k]&=&[\sigma(\xi_i),[\xi_j\xi_k,\xi_j]]=\delta_{k,i}[\sigma(\xi_j),\xi_j].
\end{eqnarray*}
It implies that $[\sigma(\xi_i),\xi_k]=0$ for any $k,i=1,\ldots,6.$  From $|\sigma|=\bar{0}$ and the transitivity of $E(1,6),$ one can deduce $\sigma(E(1,6)_{-1})\subseteq E(1,6)_{-1}$  So one may assume $\sigma(\xi_i)=\sum_{k=1}^6\lambda_k\xi_k,$ $\lambda_k\in \mathbb{F}.$ By using Equation (\ref{e2}), for distinct $i,j,s,t=1,\ldots,6$, one can deduce
\begin{eqnarray*}
0=[\sigma(\xi_i),[\xi_j\xi_t,\xi_s\xi_t]]=[\sigma(\xi_i),\xi_j\xi_s]
=\lambda_j\xi_s-\lambda_s\xi_j.
\end{eqnarray*}
Then $\lambda_k=0$ for any $k\neq i.$ That is $\sigma(\xi_i)=\lambda_i\xi_i.$ By using Equation (\ref{e2}) again, one can deduce
$$-\lambda_j=[\sigma(\xi_i),[\xi_j\xi_i,\xi_j]]=[\sigma(\xi_i),\xi_i]=[\lambda_i\xi_i,\xi_i]=-\lambda_i.$$
 Moreover, one knows  $\sigma(\xi_i)=\lambda\xi_i$ for any $i=1,\ldots,6.$
The remaining work is to prove
 $\lambda=1.$ For distinct $i,j,t,$
\begin{eqnarray}\label{6-1}
[\sigma(\xi_t\xi_j),\xi_t]=[\sigma(\xi_t\xi_j),[\xi_i\xi_t,\xi_i]]
=-[\sigma(\xi_i),[\xi_t\xi_j,\xi_i\xi_t]]
=[\lambda\xi_i,\xi_j\xi_i]=\lambda\xi_j.
\end{eqnarray}
Suppose $\sigma^{-1}$ is a left inverse of the monomorphism $\sigma,$ then
$$\sigma^{-1}([\sigma(\xi_t\xi_j),\xi_t])=[\xi_t\xi_j, \sigma^{-1}(\xi_t)]=\lambda^{-1}\xi_j.$$
Hence
\begin{eqnarray}\label{6-2}
[\sigma(\xi_t\xi_j),\xi_t]=\sigma(\lambda^{-1}\xi_j)=\xi_j.
\end{eqnarray}
Equations (\ref{6-1}) and (\ref{6-2}) imply $\lambda=1.$ The proof is completed.
\end{proof}
\begin{proposition}\label{E6-p2}
If $\sigma$ is a   Hom-Lie superalgebra structure on $E(1,6),$ then $$\sigma|_{E(1,6)_{0}}=\mathrm{id}|_{E(1,6)_{0.}}$$
\end{proposition}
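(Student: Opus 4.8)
The plan is to follow the pattern of Propositions \ref{p4-2} and \ref{E5-p2}: first reduce the possible corrections via Lemma \ref{00}, then annihilate them on each type of generator. By Proposition \ref{E6-p1} we have $\sigma|_{E(1,6)_{-1}}=\mathrm{id}$, and since $E(1,6)_{-2}=[E(1,6)_{-1},E(1,6)_{-1}]$, equation (\ref{e3}) gives $\sigma|_{E(1,6)_{-2}}=\mathrm{id}$ as well. For $u\in E(1,6)_0$, Lemma \ref{00} yields $\sigma(u)-u\in\bigoplus_{k\geq1}E(1,6)_{-k}$; as $E(1,6)_0$ is purely even, $E(1,6)_{-1}$ purely odd and $E(1,6)_{-2}\simeq\mathbb F\cdot 1$ even, the evenness of $\sigma$ forces $\sigma(u)-u\in E(1,6)_{-2}=\mathbb F\cdot 1$. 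Recall $E(1,6)_0=\mathbb F x\oplus\langle\xi_i\xi_j\rangle$, where the quadratics span a copy of $\mathfrak{so}(6)$ and $x$ is central in $E(1,6)_0$, acting on each $E(1,6)_d$ as multiplication by $d$. It therefore suffices to treat the two kinds of generators $\xi_i\xi_j$ and $x$ separately.

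For the $\mathfrak{so}(6)$-part I would use multiplicativity. A direct computation with the given bracket gives $\xi_i\xi_j=[\xi_i\xi_k,\xi_j\xi_k]$ for any $k\neq i,j$, together with $[\xi_p\xi_q,1]=0$ and $[1,1]=0$. Writing $\sigma(\xi_i\xi_j)=\xi_i\xi_j+c_{ij}\cdot 1$ and applying (\ref{e3}),
\begin{eqnarray*}
\sigma(\xi_i\xi_j)=[\sigma(\xi_i\xi_k),\sigma(\xi_j\xi_k)]=[\xi_i\xi_k+c_{ik}\cdot 1,\ \xi_j\xi_k+c_{jk}\cdot 1]=[\xi_i\xi_k,\xi_j\xi_k]=\xi_i\xi_j,
\end{eqnarray*}
since every cross term involves a bracket with $1$ and hence vanishes. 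Thus $c_{ij}=0$ and $\sigma|_{\mathfrak{so}(6)}=\mathrm{id}$.

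The genuine obstacle is the central element $x$, for which $\sigma(x)=x+c\cdot 1$. The correction $c\cdot 1$ is invisible inside $E(1,6)_{\leq0}$: one checks $[1,E(1,6)_{\leq0}]=0$ except $[1,x]=2$, while $x\notin[E(1,6)_{\leq0},E(1,6)_{\leq0}]$ because $[E(1,6)_0,E(1,6)_0]=\mathfrak{so}(6)$. Hence no relation among nonpositive components can detect $c$, and one is forced up into $E(1,6)_1$. I would pick $w\in E(1,6)_1$ and an index $k$ with $[\xi_k,w]=\beta x+s$, $\beta\neq0$, $s\in\mathfrak{so}(6)$ — such a pair exists since the grading element $x$ lies in $[E(1,6)_1,E(1,6)_{-1}]$; concretely $[\xi_k,x\xi_k]=-x$. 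Then I would compute $[\sigma(w),\xi_k]$ two ways. Applying the Hom-Jacobi identity (\ref{e2}) to $(x,\xi_k,w)$ and using $[x,\xi_k]=-\xi_k$, $[x,w]=w$, $\sigma(\xi_k)=\xi_k$, $\sigma|_{\mathfrak{so}(6)}=\mathrm{id}$ and $[\sigma(x),\beta x+s]=2\beta c\cdot 1$ (from $[1,x]=2$, $[x,s]=0=[1,s]$) gives
\begin{eqnarray*}
[\sigma(w),\xi_k]=\beta x+s-2\beta c\cdot 1,
\end{eqnarray*}
whereas multiplicativity (\ref{e3}) gives
\begin{eqnarray*}
[\sigma(w),\xi_k]=[\sigma(w),\sigma(\xi_k)]=\sigma([w,\xi_k])=\sigma(\beta x+s)=\beta x+s+\beta c\cdot 1.
\end{eqnarray*}
The unknown value $\sigma(w)$ cancels, and comparing the $E(1,6)_{-2}$-components yields $3\beta c=0$, so $c=0$. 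Together with the previous paragraph this gives $\sigma|_{E(1,6)_0}=\mathrm{id}$. The main difficulty, and the only place where positive-degree information is unavoidable, is exactly this treatment of the central grading element $x$; the remainder is routine bracket bookkeeping.
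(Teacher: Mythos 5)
Your proof is correct, but it diverges from the paper's at exactly the point where the paper is weakest, so the comparison is worth recording. The paper makes the same reduction $\sigma(y)=y+\lambda$, $\lambda\in E(1,6)_{-2}\simeq\mathbb{F}$, but then treats the grading element \emph{first}: it computes $[\sigma(x),x]=2\lambda$ and sets this against the assumption (\ref{6-4}), $[\sigma(x),x]=\mu\sigma(x)$, justified only by the remark that $x$ lies in a Cartan subalgebra; comparing coefficients gives $\mu=0$, hence $\lambda=0$, and only afterwards are the corrections to $\xi_i\xi_j$ killed via $[\sigma(\xi_i\xi_j),x]=[\sigma(\xi_i\xi_j),\sigma(x)]=\sigma[\xi_i\xi_j,x]=0$. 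But the assumption (\ref{6-4}) says precisely that $\sigma(x)=x+\lambda$ is an eigenvector of $[\,\cdot\,,x]$, and since $x$ and $1$ are eigenvectors with the distinct eigenvalues $0$ and $2$, this is equivalent to $\lambda=0$ --- the paper assumes what it is proving, and your own observation shows no repair confined to nonpositive degrees is possible: the correction enters every identity only through $[\lambda,g]=2\lambda\partial_x(g)$, which vanishes for all $g\in E(1,6)_{\leq 0}$ except $x$ itself, and $x$ never occurs as a bracket of nonpositive elements. So your ascent into $E(1,6)_1$, with the unknown $\sigma(w)$ cancelling between the Hom-Jacobi computation and the multiplicativity computation, is not merely an alternative route but supplies the step the paper's argument is missing; likewise your treatment of $\xi_i\xi_j$ by multiplicativity ($\xi_i\xi_j=[\xi_i\xi_k,\xi_j\xi_k]$, cross terms with constants vanishing) is self-contained, whereas the paper's needs $\sigma(x)=x$ beforehand.

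One caveat on your degree-one pair: $x\xi_k$ lies in $K(1,6)_1$ but not necessarily in $E(1,6)_1$, since the degree-one component of $E(1,6)$ is a \emph{proper} $E(1,6)_0$-submodule of $K(1,6)_1$; so the ``concrete'' example should not be taken literally. Your hedged formulation $[\xi_k,w]=\beta x+s$ with $\beta\neq0$, $s\in\mathfrak{so}(6)$ is exactly what survives, and such a pair does exist: simplicity together with $E(1,6)_{-2}=[E(1,6)_{-1},E(1,6)_{-1}]$ and the Jacobi identity gives $E(1,6)_0=[E(1,6)_0,E(1,6)_0]+[E(1,6)_{-1},E(1,6)_1]$, while $[E(1,6)_0,E(1,6)_0]=\mathfrak{so}(6)$ does not contain $x$. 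Spelling out that one sentence would make your argument complete.
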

\begin{proof}
For any $y\in {E(1,6)_{0}},$ by Proposition \ref{E6-p1} and Lemma \ref{00},
 $\sigma(y)-y\in E(1,6)_{-2}\simeq\mathbb{F}.$
So, one may suppose $\sigma(y)=y+\lambda,$ $\lambda\in \mathbb{F}.$ Then
\begin{eqnarray}\label{6-3}
[\sigma(y),x]=[y+\lambda,x]=[\lambda,x]=2\lambda.
\end{eqnarray}

If $y=x,$
noting that $x$ is an element of a basis of $E(1,6)_0$'s Cartan subalgebra, one may assume
\begin{eqnarray}\label{6-4}
[\sigma(x),x]=\mu\sigma(x)=\mu(x+\lambda),\quad \mu\in \mathbb{F}.
\end{eqnarray}
Comparing (\ref{6-3}) and (\ref{6-4}), one has $\mu=0,$ and then $\lambda=0,$ that is $\sigma(x)=x.$

If $y=\xi_i\xi_j,$
\begin{eqnarray}\label{6-5}
[\sigma(\xi_i\xi_j),x]=[\sigma(\xi_i\xi_j),\sigma(x)]=\sigma[\xi_i\xi_j,x]=0.
\end{eqnarray}
From (\ref{6-3}) and (\ref{6-5}), one gets $\lambda=0$ again. The proof is completed.
\end{proof}

\subsection{Result}
Propositions \ref{p4-1}, \ref{p4-2}, \ref{E5-p1}-\ref{E6-p2} show that
 the Hom-Lie superalgebra structures on the $0$-$\mathrm{th}$ and $-1$-$\mathrm{st}$ $\mathbb{Z}$-components of each infinite-dimensional  exceptional simple Lie superalgebras are trivial.
Together with Lemma  \ref{l1}, it is sufficient to draw the following conclusion.
\begin{theorem}
There is only the trivial  Hom-Lie superalgebra structure on each exceptional simple Lie superalgebras.
\end{theorem}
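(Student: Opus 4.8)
The plan is to assemble the five pairs of propositions established above with Lemma \ref{l1}, exploiting the fact recorded in Section 2 that every exceptional simple Lie superalgebra of vector fields is transitive and irreducible. First I would recall that Lemma \ref{l1} reduces, for any Hom-Lie superalgebra structure $\sigma$ on a transitive and irreducible Lie superalgebra $\mathfrak{g} = \oplus_{i \geq -h} \mathfrak{g}_i$, the assertion $\sigma = \mathrm{id}|_{\mathfrak{g}}$ to the single hypothesis $\sigma|_{\mathfrak{g}_0 \oplus \mathfrak{g}_{-1}} = \mathrm{id}|_{\mathfrak{g}_0 \oplus \mathfrak{g}_{-1}}$. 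Hence it suffices to verify this hypothesis for each of the five algebras, which is exactly what the preceding propositions accomplish.

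Concretely, I would proceed algebra by algebra. For $E(4,4)$, Propositions \ref{p4-1} and \ref{p4-2} give $\sigma|_{E(4,4)_{-1}} = \mathrm{id}$ and $\sigma|_{E(4,4)_0} = \mathrm{id}$, hence $\sigma|_{E(4,4)_0 \oplus E(4,4)_{-1}} = \mathrm{id}$. For each of $E(5,10)$, $E(3,6)$ and $E(3,8)$, Propositions \ref{E5-p1} and \ref{E5-p2} supply the corresponding equalities on $\mathfrak{g}_{-1}$ and $\mathfrak{g}_0$. For $E(1,6)$, Propositions \ref{E6-p1} and \ref{E6-p2} do the same. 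In every case the hypothesis of Lemma \ref{l1} is met, and applying that lemma yields $\sigma = \mathrm{id}|_{\mathfrak{g}}$; that is, $\sigma$ is the trivial Hom-Lie superalgebra structure. Since these five algebras exhaust the infinite-dimensional exceptional simple Lie superalgebras of vector fields, the theorem follows.

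Because the substantive analytic and combinatorial work has already been carried out in the propositions -- the descent from an arbitrary $\sigma$ to the identity on the bottom two components via transitivity, the Jacobi-type identity (\ref{e2}), and the explicit basis computations -- this closing step is essentially a bookkeeping argument, and I do not expect a genuine obstacle. The only point requiring care is to confirm that transitivity and irreducibility genuinely hold for all five algebras so that Lemma \ref{l1} applies uniformly; this is guaranteed by the structural facts recorded in Section 2, in particular the relation $\mathfrak{g}_{-i} = \mathfrak{g}_{-1}^i$ for $i \geq 1$ together with the transitivity condition (\ref{e4}).
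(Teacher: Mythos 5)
Your proposal is correct and follows exactly the paper's own argument: the paper's concluding subsection likewise combines Propositions \ref{p4-1}, \ref{p4-2}, \ref{E5-p1}--\ref{E6-p2} (triviality of $\sigma$ on the $0$-th and $-1$-st components of each of the five algebras) with Lemma \ref{l1} to conclude $\sigma=\mathrm{id}|_{\mathfrak{g}}$. Your added remark about verifying transitivity and irreducibility is a reasonable point of care, and it is indeed covered by the structural facts recorded in Section 2.
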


\end{document}